\RequirePackage{fix-cm}
\documentclass[smallextended]{svjour3}       

\smartqed  
\usepackage{graphicx}
\usepackage{mathptmx}      
\usepackage{subcaption}
\usepackage{amsmath}
\usepackage{amssymb}
\usepackage{hyperref}
\makeatletter
\let\cl@chapter\undefined
\makeatletter
\usepackage{cleveref}
\usepackage{xcolor}
\definecolor{pec}{HTML}{FEC04C}

 \usepackage{amsthm}
 
\usepackage{thmtools, thm-restate}
\usepackage{tabularx}
\usepackage{todonotes}
\usepackage{optidef}
\usepackage{algpseudocode}
\usepackage{algorithm}
\usepackage{tikz}
\usetikzlibrary{hobby, shapes.geometric, arrows, arrows.meta, calc}
\usepackage{tikz-3dplot}
\usepackage{authblk}

\usepackage[style=numeric,maxnames=10]{biblatex}
\usepackage{bm}
\usepackage{booktabs}
\newcommand{\real}{\mathbb{R}}
\newcommand{\complex}{\mathbb{C}}
\newcommand{\jim}{\bm{i}}
\newcommand\M{\mathcal{M}}%

\newcommand{\EFF}{E_{\mathrm{FF}}}
\newcommand{\Einc}{E^{\mathrm{inc}}}
\newcommand{\Eout}{E^\mathrm{out}}

\newtheorem{fact}{Fact}
\crefname{fact}{fact}{facts}
\Crefname{fact}{Fact}{Facts}

\renewcommand\Re{\operatorname{Re}}%
\renewcommand\Im{\operatorname{Im}}%
\renewcommand\div{\operatorname{div}}%
\newcommand\curl{\operatorname{curl}}%
\newcommand\supp{\operatorname{supp}}%
\newcommand\conv{\operatorname{conv}}%

\newcommand{\C}{\mathbb{C}}
\newcommand{\R}{\mathbb{R}}

\newcommand{\V}{\mathcal{V}}

\newcommand*\dd{\mathop{}\!\mathrm{d}}

\begin{document}
\tdplotsetmaincoords{60}{110}
\pgfmathsetmacro{\rvec}{.8}
\pgfmathsetmacro{\thetavec}{30}
\pgfmathsetmacro{\phivec}{60}

\title{Solving A PDE-Constrained Antenna Design Problem By Mixed-Integer Programming
\thanks{This research is conducted as part of the Dutch National Growth Fund programme \mbox{POLARIS} (www.polaris-ngf.nl).
}
}


\author{Ruben Hoeksma         \and
        Jesse van Rhijn       \and
        Marc Uetz             \and
        Wick Wijnholds
}


\institute{R.P. Hoeksma \at
              University of Twente, Faculty of Electrical Engineering, Mathematics and Computer Science \\
              \email{r.p.hoeksma@utwente.nl}           
           \and
           J. van Rhijn \at
              University of Twente, Faculty of Electrical Engineering, Mathematics and Computer Science, and\\
              TNO, Radar Technology Department, The Hague\\
              \email{j.vanrhijn@utwente.nl}
           \and
           M. Uetz \at
              University of Twente, Faculty of Electrical Engineering, Mathematics and Computer Science\\
              \email{m.uetz@utwente.nl}
           \and
           W.J. Wijnholds \at
              University of Twente, Faculty of Electrical Engineering, Mathematics and Computer Science\\
              \email{w.j.wijnholds@utwente.nl}
}

\date{}

\maketitle

\begin{abstract}
This paper shows that a mixed integer PDE-constrained design optimization problem can be made amenable to be solved to optimality using standard mixed-integer programming solvers. The concrete problem that we consider is the optimal design of the topology of an antenna, a  problem that keeps haunting
the engineering community. 
Indeed, up to this point no exact method was known. Existing methods  rely on iterative use of PDE-solvers, embedded in either gradient methods or metaheuristics. However, none of these methods come with performance guarantees. We break this barrier, showing that important antenna design problems can be solved optimally  using a mixed integer linear program, plus one singleton second order cone constraint  in two variables. The main contribution of the paper is to develop such a formulation. There are two major methodological steps to get there.
The first is to show how a standard boundary element method (BEM), based on a discretized design space, translates into a mixed integer program at all. 
Approached intuitively, this yields a maximization problem with non-concave objective, which is hence not tractable.
The second is to make the objective function linear, using a new Charnes--Cooper transform for mathematical programming over complex numbers.
We evaluate our approach on the classic dipole antenna, reconfirming well-known experimental results, and apply it to a widely-used test case, the Yagi-Uda antenna. In a successor paper, we report on further computational enhancements using physics-informed cuts.
\end{abstract}

\section{Introduction}
\label{section: introduction}
Electromagnetic waves are all around us; from the radio waves used to transmit information to visible light, UV light, X-rays and beyond. When these waves encounter matter, they interact with it, to be deflected, blocked or reflected. The various ways in which this occurs are just as diverse as the ways in which we observe and utilize electromagnetic waves, and are collectively known as \emph{scattering}. For a schematic representation, see \Cref{fig: blob}. The focus of this work is to describe a method by which the shape of an object that scatters electromagnetic waves, a \emph{scatterer}, can be optimally chosen from a given design space.
To this end, we set up a mixed-integer program (MIP), that schematically takes the form
\begin{align*}
    \max \quad& (\text{electric field amplitude in some direction}) \\
    \text{s.t.} \quad & (\text{electric field equations are satisfied}), \\
    \quad & (\text{design constraints are satisfied}),
\end{align*}
where we have chosen as our objective the maximization of scattered electric field in some direction. This is consistent with the application that 
this work is actually based on:
the optimal design of the topology of an antenna.

It quickly turns out that this is a challenging undertaking: First, the object of interest is the strength of the emitted electric field in the distance, and the design variables describe the topology of an antenna. However the link between
these two quantities is not explicitly given. It is implicitly described by Maxwells equations, which gives rise to an inverse problem. We resolve this issue and show how a standard boundary element method (BEM) can be used to 
derive a first mixed integer program; this is discussed in \Cref{section: basic model}. Second, 
the na\"ive approach still leads to an intractable  nonconvex mixed-integer program. 
To overcome this, in \Cref{section: charnescooper} we show how to transform the na\"ive MIP into a convex mixed-integer program with linear objective  using a novel approach that is reminiscent of the Charnes--Cooper transformation for linear fractional programming.~\cite{charnes1962programming}. Indeed, we show that our approach can be combined with that of Charnes and Cooper to obtain a more general result, applicable to linear fractional programming over the complex numbers.

There are two research areas which relate to what we do in this paper. 
First and foremost, this is an antenna topology optimization problem. Finding good antenna designs has been the subject of decades of research. Various methods have been developed; they can roughly be divided into two main classes~\cite{lucchini2022topology}:
\begin{enumerate}
    \item Gradient-based methods: this research focuses on finding derivatives of, e.g., the size of certain features of an antenna. This gradient is then fed into an optimization algorithm to find a local optimum. This approach has the significant downside that the topology of the antenna is fixed; no holes can be added or removed. Also, finding these derivatives can be a laborious process; the found derivatives are often specific to the type of antenna.
    \item Metaheuristic methods: a general-purpose metaheuristic like a genetic algorithm is employed to search for a `good' antenna. The main advantage of these methods is that they are very flexible; the main downside is that, by their nature, these algorithms provide no guarantee of solution quality, nor an evident stopping criterion. Since they usually rely on a penalty term in the objective function to impose constraints, solutions provided by these methods may not satisfy all of the constraints.
\end{enumerate}

In contrast, MIPs allow us to find globally optimal solutions subject to hard constraint, with a known stopping criterion; even if terminated prematurely, they can provide bounds on the quality of possible solutions.

Second, the problem that we solve here is an example of a class of problems also known as Mixed-Integer PDE-Constrained Optimization (MIPDECO). 
In these problems, one searches for the optimal (partially) integer-valued (and often infinite-dimensional) input to a system described by one or more PDEs.~\cite{leyffer2015MIPDECO}
These optimization procedures are often not exact, as exact methods are rarely feasible for real-world problems~\cite{leyfferConvergenceSumupRounding2021a,sharmaInversionConvectionDiffusion2021,WESSELHOEFT2018799}.

In contrast, we here demonstrate that an exact method is feasible for a practically relevant class of PDE-constrained design problems, namely the design of optimal 2D antenna topologies. Specifically, in this paper we evaluate our MIP for two test cases: the classic dipole antenna, and the Yagi-Uda antenna. In the former case, we reconfirm existing literature values on its optimal length. In the latter case, we show how one can apply our method to a nontrivial design problem and obtain an optimal solution in reasonable computation time.

This paper is the first of a series of works on this topic. In our second work~\cite{hoeksma2026physics} we discuss a series of extensions to the method we present here, which enable the design of more complex antennas subject to additional, practical constraints. Together with these two fundamental works, a companion paper~\cite{vanrhijn2026optimal} showcases the benefits of the method with more demanding antenna design applications; that companion paper has the purpose to advertise the approach for the antenna engineering community using three practical use cases.

The structure of this paper is as follows:

First, in \Cref{section: prelims}, we build up the electromagnetics background needed to construct (instances of) our MIP,
Then, in \Cref{section: formulation}, we derive the aforementioned Charnes--Cooper-like transform, and apply it to our problem to obtain a feasible MIP,
In \Cref{section: results}, we then test our MIP on two antenna design problems,
Finally, \Cref{section: discussion} contains some concluding remarks.
\section{Preliminaries}
\label{section: prelims}
\subsection{Notation}
\label{section: notation}
To avoid confusion with various summation indices, the imaginary unit is denoted by a boldface $\jim$, so $\jim^2 = -1$.
For a complex number $z \in \complex$, we write its complex conjugate as $\bar z$,
and we denote the Hermitian adjoint (that is, the complex conjugate transpose) of a matrix $A \in \complex^{n \times m}$ as $A^H$.\footnote{
    While these are the standard notations in the mathematics literature, physics and engineering works often denote the complex conjugate by $(\cdot)^*$ and the Hermitian adjoint by $(\cdot)^\dagger$.
}
We use the standard component-wise partial order for vectors; that is, if $x, y \in \real^n$, then $x \leq y$ means $x_i \leq y_i$ for all $i \in [n]$.
For complex vectors we extend this in the natural way: if $x, y \in \complex^n$, then $x \leq y$ means $\Re\{x_i\} \leq \Re\{y_i\}$ and $\Im\{x_i\} \leq \Im\{y_i\}$ for all $i \in [n]$.

Fix a Cartesian coordinate system $(x, y, z)$ for $\real^3$.
It is often more convenient to work in spherical coordinates $(r, \vartheta, \varphi)$, where
\begin{align*}
    x &= r\sin\vartheta\cos\varphi, \\
    y &= r\sin\vartheta\sin\varphi, \\
    z &= r \cos\vartheta.
\end{align*}
Here, $r = \sqrt{x^2 + y^2 + z^2}$ is the Euclidean distance from the origin, $\vartheta \in [0, \pi]$ denotes the polar angle with respect to the $z$-axis, and $\varphi \in [0, 2\pi)$ is the azimuthal angle with respect to the $x$-axis.

\subsection{Electromagnetism}
\label{section: electromagnetism}

The interactions that govern electromagnetic phenomena (including scattering) are captured in Maxwell's equations (see, e.g., \cite{griffithsIntroductionElectrodynamics2023}), which describe the relationships between currents, charges, and electric and magnetic fields, given here in their time-harmonic form:
\begin{align}
    \div E &= \frac{\rho}{\epsilon_0}\label{eq: Maxwell1}\\
    \div B &= 0\label{eq: Maxwell2}\\
    \curl E &= - \jim \omega B\label{eq: Maxwell3}\\
    \curl B &= \mu_0 \left(J +\jim \omega \epsilon_0 E\right),\label{eq: Maxwell4}
\end{align}
where 
\begin{itemize}
\item $J, \rho: \R^3 \rightarrow \C^3$ are the current and charge density, 
\item $E, B: \R^3 \rightarrow \C^3 $ are the electric and magnetic field, 
\item $\omega \in \R_{>0}$ is the angular frequency at which we evaluate these equations,
\item $\mu_0, \epsilon_0 \in \R_{>0}$ are known constants, and
\item The operators $\div$ and $\curl$ are the standard linear differential operators from vector calculus.
\end{itemize}

Maxwell's equations describe several different closely related phenomena. Intuitively, one can see them as describing the following behavior:

Charges generate electric fields and electric fields move charges~\eqref{eq: Maxwell1}, magnetic fields travel in loops~\eqref{eq: Maxwell2}, currents can generate magnetic fields and vice versa~\eqref{eq: Maxwell4}, and electric and magnetic fields can interact with each other to propagate as electromagnetic waves (e.g., light, radio waves)~\eqref{eq: Maxwell3},~\eqref{eq: Maxwell4}.

In this paper, we will be working mostly with currents and electric fields - in particular, we are interested in the interactions that these have that generate propagating electromagnetic waves. These interactions are collectively known as scattering (see \Cref{fig: blob}). They are what happens when an electric field encounters an object: the electric field will induce currents in this object, which in turn will generate their own electric and magnetic fields, which, under the right conditions, will propagate as waves.

We will use antennas as a running example of an electromagnetic scatterer; the analysis we provide, however, will be applicable to more general scatterers.

To optimize an antenna, we are interested in maximizing the amplitude of the electric field radiated by the antenna in the so-called far-field region in some prescribed direction. 
The connection between input, i.e., the voltage that we apply, which is given through the electric field $\Einc$ that is impressed on the antenna, and the object of interest, which is the electric field $\Eout$ radiated by the antenna, is unfortunately only given implicitly.  
To obtain $\Eout$, we therefore have to solve an inverse problem. 

We therefore briefly need to introduce the electrodynamics quantities that are necessary to describe what links input electric field $\Einc$ and output $\Eout$. First, $\Einc$ is given through the externally injected voltage. Note that for this paper, this is considered fixed. Our decision variables will be exclusively the geometry of the antenna. The link between $\Einc$ and $\Eout$ is the current density $J$ on the antenna. Their relationship is described through two linear operators $\mathcal{K}$ and $\mathcal{L}$. To formulate these, we model the geometry of an antenna as a bounded domain $\Omega$ of perfect electric conductor (PEC) with surface $\partial \Omega$. Due to this assumption, nonzero current densities exist only on $\partial \Omega$. 

\begin{figure}
    \centering
    \begin{tikzpicture}
        \path[draw,use Hobby shortcut,closed=true]
            (0,0) .. (1,.5) .. (3,1) .. (4,.3) .. (2,-1) .. (.5,-1);
            \node[] (x) at (2, 1.1) {$\Omega$};

            \node at (2,1.1) {$\Omega$};
           
            \foreach \i in {-1, -0.5, 0, 0.5, 1, 1.5} {
                \draw[->] (-2, -\i) -- (-1, -\i);
            }

            \node at (2, 0) {$\mathcal{L}J = (\Einc)_\parallel$};
            
            \node[] (y) at (-3, -0.25) {$\Einc$};

            \draw [red,thick,domain=20:60] plot ({2+3.5*cos(\x)}, {-0.8+3.5*sin(\x)});
            \draw [red,thick,domain=20:60] plot ({2+4*cos(\x)}, {-0.8+4*sin(\x)});
            \draw [red,thick,domain=20:60] plot ({2+4.5*cos(\x)}, {-0.8+4.5*sin(\x)});
            
            \node[red] (Eout) at ({2+5.25*cos(40)}, {-0.8+5.25*sin(40)}) {$\Eout$};

    \end{tikzpicture}
    \caption{
        A general electromagnetic scattering problem.
        The incident field $\Einc$ induces a current density $J$ on the body $\Omega$, which radiates a field $\Eout$.
        In the case of an antenna, $\Einc$ is provided by a time-harmonic voltage source.
    }
    \label{fig: blob}
\end{figure}

\begin{remark}
    Although this section is written in terms of a three-dimensional conducting body $\Omega$ with two-dimensional boundary $\partial\Omega$, from \Cref{section: discrete geometry} onward we solely consider two-dimensional, infinitely thin antennas.
    Letting $\Sigma$ denote the two-dimensional antenna surface, the theory here still applies as long as one replaces both $\Omega$ and $\partial\Omega$ by $\Sigma$. This is an approach that is often taken in the antenna optimization community, as it generally provides a very good approximation of the behavior of actual antennas (with finite thickness), since the currents on these run only in a very thin surface layer.
\end{remark}

\begin{remark}
    While we model our scatterer as a PEC, this is not necessary to obtain a boundary element method (BEM) formulation. BEM formulations have been developed for many different material compositions, including dielectrics and layered media~\cite[Chapter 1.3]{davidsonComputationalElectromagneticsRF2010}.
\end{remark}

From Maxwell's equations, one can then derive the following; see, e.g., \cite{volakis2012IntegralEquationMethods}.
\begin{fact}\label{fact: EFIE operator}
	There exists a linear integrodifferential operator $\mathcal{L}: H^{1/2}_{\div}(\partial \Omega) \rightarrow H^{1/2}_{\div}(\partial \Omega)$
    that maps the current density $J$ on the surface $\partial \Omega$ of a bounded domain $\Omega \subset \R^3$ of PEC to the tangential component of the incident electric field on $\partial \Omega$, that is: 
	\begin{equation}
		\label{eq:EFIE}
	\mathcal{L} J = (\Einc)_{||}, 
	\end{equation}
	where $(\Einc)_{||}$ is the component of $\Einc$ tangential to $\partial \Omega$.
\end{fact}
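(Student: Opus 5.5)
We derive the EFIE from the time-harmonic Maxwell equations \eqref{eq: Maxwell1}--\eqref{eq: Maxwell4} by splitting the total field into incident and scattered parts and using the PEC boundary condition.

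\textbf{Step 1: potentials.} Write the total field as $E = \Einc + E^{\mathrm{sc}}$, where $E^{\mathrm{sc}}$ is radiated by the induced surface current $J$ on $\partial\Omega$ and the associated surface charge. Continuity of charge, obtained from the divergence of \eqref{eq: Maxwell4} combined with \eqref{eq: Maxwell1}, gives $\rho = -\div J/(\jim\omega)$ (surface divergence).
\begin{itemize}
\item By \eqref{eq: Maxwell2}, $B = \curl A$.
\item By \eqref{eq: Maxwell3}, $E^{\mathrm{sc}} = -\jim\omega A - \nabla\Phi$.
\item In the Lorenz gauge, \eqref{eq: Maxwell4} reduces to Helmholtz equations $(\Delta + k^2)A = -\mu_0 J$ and $(\Delta+k^2)\Phi = -\rho/\epsilon_0$, with $k = \omega\sqrt{\mu_0\epsilon_0}$.
\end{itemize}
Imposing the Sommerfeld radiation condition selects the outgoing Green's function $G(x,y) = e^{-\jim k|x-y|}/(4\pi|x-y|)$. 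This yields
\begin{align*}
E^{\mathrm{sc}}(x) = -\jim\omega\mu_0\int_{\partial\Omega} G(x,y)J(y)\dd y + \frac{1}{\jim\omega\epsilon_0}\nabla\int_{\partial\Omega} G(x,y)\div J(y)\dd y.
\end{align*}

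\textbf{Step 2: PEC condition.} On a perfect conductor the tangential total field vanishes, so $(E^{\mathrm{sc}})_{\parallel} = -(\Einc)_{\parallel}$ on $\partial\Omega$. Define
\begin{align*}
\mathcal{L}J := -\bigl(E^{\mathrm{sc}}[J]\bigr)_{\parallel}.
\end{align*}
This gives \eqref{eq:EFIE} directly. Linearity is immediate because $\mathcal{L}$ is built from integral operators with a fixed kernel, a surface divergence, and a gradient, followed by tangential projection. It is integrodifferential by construction.

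\textbf{Step 3: mapping properties.} This is the main obstacle. The kernel is weakly singular, and one must make sense of the boundary trace of $\nabla\int G\,\div J$. The approach is to
\begin{itemize}
\item use the weak (Galerkin) form, moving the gradient onto the test function by surface integration by parts;
\item invoke the standard results that the single-layer potential maps $H^{-1/2}(\partial\Omega)\to H^{1/2}(\partial\Omega)$ and that the vector single-layer operator is bounded on the appropriate tangential trace spaces.
\end{itemize}
The precise function space in the statement would be taken from the cited literature, e.g.\ \cite{volakis2012IntegralEquationMethods}, rather than re-derived. Note that the standard EFIE operator is usually said to map the divergence-conforming space $H^{-1/2}_{\div}$ into its dual $H^{-1/2}_{\mathrm{curl}}$. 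The plan is therefore to cite the boundedness and read the stated spaces in that sense.
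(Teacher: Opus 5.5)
Your proposal is correct and follows the route the paper has in mind. The paper gives no argument of its own: it only says the EFIE follows from Maxwell's equations and cites Volakis et al. Your potential-based derivation (Lorenz gauge, outgoing Green's function consistent with the paper's $\curl E = -\jim\omega B$ convention, and the PEC condition $(E^{\mathrm{sc}} + \Einc)_\parallel = 0$) is the standard derivation found there.

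Your caveat about the function spaces is also well founded. With $\mathcal{L}J$ defined as a tangential-components trace, the standard boundedness result is $H^{-1/2}_{\div}(\partial\Omega)\to H^{-1/2}_{\curl}(\partial\Omega)$, so deferring to the literature and reading the stated $H^{1/2}_{\div}$ spaces in that sense is the right call.
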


\begin{fact}
\label{fact: far field operator}
	There exists a linear integral operator $\mathcal{K}: H^{1/2}_{\div}(\partial \Omega) \rightarrow H^1_{\mathrm{loc}}(\R^3\setminus \bar{\Omega})^3$ that maps the current density $J$ on the boundary $\partial \Omega$ of a domain $\Omega$ of PEC to the electric field $\Eout$ outside of the closure $\bar{\Omega}$ of $\Omega$ generated by these currents, that is: \begin{equation}
		\label{eq:Eout}
		 \Eout = \mathcal{K} J.
	\end{equation}
\end{fact}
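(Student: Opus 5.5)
The plan is a standard potential-theoretic construction: represent the fields through vector and scalar potentials generated by the surface current on $\partial\Omega$, and read off $\mathcal{K}$ directly from that representation. First combine the time-harmonic Maxwell equations \eqref{eq: Maxwell1}--\eqref{eq: Maxwell4} with charge conservation $\div J = -\jim\omega\rho$, which follows by taking the divergence of \eqref{eq: Maxwell4}. Since $\div B = 0$ by \eqref{eq: Maxwell2}, write $B = \curl A$. Then \eqref{eq: Maxwell3} gives $\curl(E + \jim\omega A) = 0$, so $E = -\jim\omega A - \nabla\Phi$. Impose the Lorenz gauge $\div A = -\jim\omega\mu_0\epsilon_0\Phi$. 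With this gauge, \eqref{eq: Maxwell4} and \eqref{eq: Maxwell1} decouple into the Helmholtz equations $\Delta A + k^2 A = -\mu_0 J$ and $\Delta\Phi + k^2\Phi = -\rho/\epsilon_0$, where $k = \omega\sqrt{\mu_0\epsilon_0}$.

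Next, select the radiating solutions, that is, those satisfying the Sommerfeld condition. This is done by convolving with the outgoing Green's function $G(x,y) = e^{-\jim k|x-y|}/(4\pi|x-y|)$; the sign of the exponent matches the $e^{\jim\omega t}$ convention implicit in \eqref{eq: Maxwell3}. Because $J$ is supported on $\partial\Omega$, this gives
\[
A(x) = \mu_0\int_{\partial\Omega} G(x,y)J(y)\dd y, \qquad \Phi(x) = \frac{\jim}{\omega\epsilon_0}\int_{\partial\Omega} G(x,y)\,\div_{\partial\Omega} J(y)\dd y .
\]
Substituting into $E = -\jim\omega A - \nabla\Phi$, define
\[
\mathcal{K}J(x) = -\jim\omega\mu_0\int_{\partial\Omega} G(x,y)J(y)\dd y - \frac{\jim}{\omega\epsilon_0}\nabla_x\int_{\partial\Omega} G(x,y)\,\div_{\partial\Omega}J(y)\dd y, \qquad x\in\R^3\setminus\bar\Omega .
\]
Linearity of $\mathcal{K}$ is immediate from this formula. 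It is genuinely an integral operator on exterior points: for $x$ off $\partial\Omega$ the kernel is smooth, so the gradient can be moved onto $G$.

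The remaining work is the mapping property and the identification $\Eout = \mathcal{K}J$. For $J\in H^{1/2}_{\div}(\partial\Omega)$, both $J$ and $\div_{\partial\Omega}J$ are at least $H^{-1/2}$ distributions on $\partial\Omega$. I would invoke the standard mapping properties of the Helmholtz single-layer potential, $H^{-1/2}(\partial\Omega)\to H^1_{\mathrm{loc}}(\R^3\setminus\bar\Omega)$, together with interior smoothness, to conclude that $\mathcal{K}J\in H^1_{\mathrm{loc}}(\R^3\setminus\bar\Omega)^3$. For the identification, note that the scattered field is by definition the radiating solution of Maxwell's equations with source $J$. Uniqueness of radiating solutions (Rellich's lemma) then shows that the field built above coincides with $\Eout$.

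I expect the main obstacle to be the Sobolev bookkeeping for the gradient term: one needs $\nabla$ of a single-layer potential with $H^{-1/2}$ density to remain in $H^1_{\mathrm{loc}}$ away from the boundary. I would handle this by working on compact subsets at positive distance from $\partial\Omega$, where all kernels are $C^\infty$. Since the paper cites \cite{volakis2012IntegralEquationMethods} for these facts, I would defer the detailed trace-space estimates to that reference.
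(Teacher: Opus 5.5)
The paper gives no proof of this Fact and simply defers to \cite{volakis2012IntegralEquationMethods}. Your construction, with Lorenz-gauge potentials, the outgoing Green's function for the $e^{\jim\omega t}$ convention and uniqueness of radiating solutions to identify $\mathcal{K}J$ with $\Eout$, is the standard derivation behind that citation, so the approaches coincide; your formula for $\mathcal{K}$ is correct, and your argument proves $H^1$ regularity on compact subsets of the open exterior (where $\mathcal{K}J$ is in fact $C^\infty$), whereas $H^1$ up to $\partial\Omega$ would need more than $H^{-1/2}$ control of $\div_{\partial\Omega}J$ in the gradient term (e.g.\ $\div_{\partial\Omega}J\in H^{1/2}$ on a smooth surface).
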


The Sobolev spaces $H^{1/2}_{\div}(\partial \Omega)$ and $H^1_{\mathrm{loc}}(\R\setminus \bar{\Omega})$ are so chosen to guarantee that the solutions obtained have bounded energy---that is, that they abide by the laws of physics. 
For the definition of these spaces, see \cite{hsiao1997mathematical}. For a comprehensive treatment of such spaces, see, e.g., \cite{dautrayMathematicalAnalysisNumerical1990}.

We will lay out how these facts can be used to obtain a solution method to compute $J$ and $\Eout$ in \Cref{section: bem}.

As alluded to earlier, we are interested in the so-called \textit{antenna pattern}, which consists of the  \textit{far-field} electric field values $\EFF(\theta, \phi): \R^3 \rightarrow \C^3$ for a given set of angles (in spherical coordinates) $\vartheta \in [-\pi, \pi), \varphi \in [0, \pi]$, generally defined as follows:
Let $\hat{r}_{\vartheta, \varphi}$ be the spherical unit vector with angles $\vartheta, \varphi$, and $R$ be a radial distance from the origin. Then

\[
\EFF(\vartheta, \varphi) := \lim_{R\to\infty} R \Eout(R\hat{r}_{\vartheta, \varphi}).
\]
Note that the electric field $\Eout$ contains components that scale differently with radial distance: ones that scale with $1/R^3$, $1/R^2$ and $1/R$. In practice, we are only interested in the $1/R$ component. Hence $\EFF(\vartheta, \varphi)$ represents the electric field received at a sufficient distance in some direction, where all $o(1/R)$ field components vanish.
It is this antenna pattern that we use in the objective of our optimization problem. 

\subsection{The Boundary Element Method}
\label{section: bem}
\begin{figure}
    \centering
    \begin{tikzpicture}[scale=1.2] 
\draw[->] (-0.5,0) -- (6.5,0) node[right] {$x$}; 
\draw[->] (0,-0.5) -- (0,2.8); 

\draw[gray, very thick] (0,0) sin (3, 2.5) cos (6, 0);
\def\coeff{{{1, {2.5*sin(pi/6 r)}}, {2, {2.5*sin(2*pi/6 r)}}, {3, {2.5*sin(3*pi/6 r)}}, {4, {2.5*sin(4*pi/6 r}}, {5, {2.5*sin(5*pi/6 r)}}}}
\foreach \i/\c in 
    {0/red!100!blue,
     1/red!80!blue,
     2/red!60!blue,
     3/red!40!blue,
     4/red!20!blue}
    {\draw[\c, thick] ({\coeff[\i][0] - 1}, 0) 
            -- (\coeff[\i][0], \coeff[\i][1]) 
            -- ({\coeff[\i][0] + 1}, 0);
    \fill[\c] (\coeff[\i][0], \coeff[\i][1]) circle (2pt);
    \ifnum \i < 2
        \node[above left, \c] at (\coeff[\i][0], \coeff[\i][1]) {$I_{\i}{f_\i}$};
    \fi
    \ifnum \i = 2
        \node[above=2pt, \c] at (\coeff[\i][0], \coeff[\i][1]) {$I_{\i}{f_\i}$};
    \fi
    \ifnum \i > 2
        \node[above right, \c] at (\coeff[\i][0], \coeff[\i][1]) {$I_{\i}{f_\i}$};
    \fi
    \draw[thick, dotted, \c] (\coeff[\i][0], 0) -- (\coeff[\i][0], \coeff[\i][1]);
    }

\draw[very thick, dashed] (0,0) -- (\coeff[0][0], \coeff[0][1]);
\draw[very thick, dashed] (\coeff[4][0], \coeff[4][1]) -- (6, 0);
\foreach \i in {0, ..., 3}
    \def\j{\i+1}
    \draw[very thick, dashed] (\coeff[\i][0], \coeff[\i][1]) -- (\coeff[\j][0], \coeff[\j][1]);

\foreach \x in {0, ..., 6} \fill (\x,0) circle (2pt); 

\foreach \x/\label in { 1/{x_{1}}, 2/{x_2}, 3/{x_{3}}, 4/{x_{4}}, 5/{x_5}, 6/{x_6} } 
{ \draw (\x,0.05) -- (\x,-0.05); \node[below] at (\x,-0.05) {$\label$}; } 
\end{tikzpicture}
    \caption{Example: Five overlapping scaled `tent' basis functions (each colored different) sum together to a piecewise-linear function (dashed), approximating $J$ (in gray), a half-period sine wave.}
    \label{fig: tent_function}
\end{figure}

When we are given the potential (or voltage) applied to the input port of our antenna (which is on $\partial\Omega$), we can use this to find $\Einc$. Our aim is to solve \cref{eq:EFIE} for $J$, and then substitute $J$ in \cref{eq:Eout} to find the antenna pattern. To do so, we use  the \textit{boundary element method} (BEM), which is one of the standard methods in the field of antenna engineering~\cite{davidsonComputationalElectromagneticsRF2010}. We will briefly recap how BEM works.

BEM is a computational technique whereby the boundary $\partial \Omega$ of a bounded domain $\Omega$ is discretized, and $N \in \mathbb{N}$ so-called \emph{basis functions} $f_i, i = 1, \ldots, N$ and \emph{testing functions} $g_j, j = 1, \ldots, N$ are defined on this discretization.  Here, think of piecewise linear functions living on subintervals of a line as in \Cref{fig: tent_function}, or in 3D, functions living on triangles of a triangulation of a surface, as in \Cref{fig: mesh}. 
Often, one chooses $f_i = g_i,~i=1, ..., N$, which is known as a \textit{Galerkin formulation}.

Each of these functions is supported by a small number of elements of the discretization. We then approximate our quantity of interest (in this case the current density $J$) as a weighted  sum of the basis functions, i.e.:
\begin{equation}
	J \approx \sum_{i=1}^N I_i f_i,
\end{equation}
where $I \in \C^N$.

We reformulate eq. \eqref{eq:EFIE} in a weaker form by using the testing functions. To do this, we define a suitable inner product $\langle \cdot, \cdot \rangle: H^{1/2}_{\div}(\partial \Omega) \times H^{1/2}_{\div}(\partial \Omega) \rightarrow \C$ as:

\begin{equation}
	\langle f, g\rangle = \int_{\partial \Omega} f(x') \cdot \overline{g(x')} dx'
\end{equation}
and  then use this to reformulate our problem by `testing' the left and right hand side of \eqref{eq:EFIE} using each of the testing functions $g_j$:
\begin{equation}
	\langle \mathcal{L} J, g_j\rangle = \langle (\Einc)_{||}, g_j\rangle, ~j = 1, ..., N
\end{equation}
We will now attempt to find expansion coefficients $I$ such that $\sum_{i=1}^N I_i f_i$ satisfies this set of equations, in other words:
\[
\langle\mathcal{L}\sum_{i=1}^N I_i f_i, g_j\rangle = \langle \mathcal{L} J, g_j\rangle = \langle (\Einc)_{||}, g_j \rangle, ~j = 1, ..., N. 
\]
Using linearity of $\mathcal{L}$ and substituting the above formulation, we obtain:
\begin{align*}
	\langle\mathcal{L}\sum_{i=1}^N I_i f_i, g_j\rangle &= \int_{\partial \Omega} \mathcal{L} \left(\sum_{i=1}^N I_i f_i(x')\right)\cdot \overline{g_j(x')} dx'\\
	&= \sum_{i=1}^N I_i  \int_{\partial \Omega} \mathcal{L} (f_i(x')) \cdot\overline{g_j(x')} dx'\\
    &= \sum_{i=1}^N I_i \langle\mathcal{L} f_i, g_j\rangle\\
	\langle n \times \Einc, g_j\rangle &= \int_{\partial \Omega} (n \times \Einc)(x') \cdot \overline{g_j(x')} dx'
\end{align*}
Therefore, defining
\begin{equation}\label{eq: Z definition}
	Z_{ij} = \langle \mathcal{L} f_i,g_j\rangle, ~i, j=1, ..., N
\end{equation}
and
\begin{equation}\label{eq: V definition}
	V_j =  \langle (\Einc)_{||}, g_j\rangle, j = 1, ..., N
\end{equation}
we obtain the linear system 
\begin{equation}
\label{eq: linear system MoM}
	ZI = V\,.
\end{equation}
Note that one can interpret \cref{eq: linear system MoM} as a weak version of \cref{eq:EFIE}, dependent on the chosen basis and testing functions. 
Here, note that current densities $J$ have been replaced by current density coefficients $I$, based on our choice of basis functions.

If we use a Galerkin formulation (so $g_i = f_i$), then we can use an additional property of the operator $\mathcal{L}$ from \Cref{fact: EFIE operator}, namely that
\[
\langle\mathcal{L} f_i, f_j \rangle = \langle \mathcal{L} f_j, f_i \rangle,
\]
which implies that $Z_{ij} = Z_{ji}$, so $Z$ is symmetric~\cite{rao1982electromagnetic}.

Recalling \Cref{fact: far field operator}, we apply $\mathcal{K}$ to our approximation of $J$ to obtain the electric field $\Eout$:
\begin{equation}\label{eq: far field coefficients}
	\Eout \approx \mathcal{K}\sum_{i=1}^N I_i f_i= \sum_{i=1}^N I_i \mathcal{K}f_i
\end{equation}
For a given $\vartheta, \varphi$, we then find the far-field values as
\begin{align*}
\EFF(\vartheta, \varphi) &= \lim_{R\to\infty} R E^\text{out}(R\hat{r}_{\vartheta, \varphi}) \approx \lim_{R\to\infty} R\sum_{i=1}^N I_i (\mathcal{K}f_i)(R\hat{r}_{\vartheta, \varphi})\\
&= \sum_{i=1}^N I_i \left(\lim_{R\to\infty} R(\mathcal{K}f_i)(R\hat{r}_{\vartheta, \varphi})\right)\,.
\end{align*}

Defining 
\[
    c_i(\vartheta, \varphi) = \lim_{R\to\infty} R(\mathcal{K}f_i)(R\hat{r}_{\vartheta, \varphi}), \quad i= 1, \ldots, N
\]
yields
\[
    \EFF(\vartheta, \varphi) \approx c(\vartheta, \varphi)^T I,
\]
so that $\EFF(\vartheta, \varphi)$ is a linear combination of $I$. Here, it is important to realize that the coefficients $c_i(\vartheta, \varphi)$ depend only on the geometry, its discretization and the choice of basis functions.

\subsection{Discretized Antenna Geometry}
\label{section: discrete geometry}

With the machinery described in \Cref{section: electromagnetism} and \Cref{section: bem} in place, all that remains in this section is to discuss the geometry discretization required to apply the BEM.
The usual approach in electromagnetic modeling is to approximate the boundary of the radiating body under study by a polygonal mesh with triangular faces~\cite{davidsonComputationalElectromagneticsRF2010}.
This allows for a great deal of flexibility~\cite{rao1982electromagnetic}, although one must take some care that such a triangulation is sufficiently refined in ``critical'' regions such as 
sharp corners.
\Cref{fig: mesh} shows an example of a triangulation of some 3D shape.

Since the triangulation is central to the underlying numerical modeling problem, we need a data structure to capture the geometry of the triangulation.
The standard approach is to use a \emph{mesh}, which is a 3-uniform hypergraph with some underlying geometry.

\begin{definition}
    A \emph{mesh} $\M = (\V, T)$ is a set $\V = \{v_1, v_2, \ldots, v_{|\V|}\}$ of vertices $v_i \in \real^3$ and a set $T = \{t_1, t_2, \ldots, t_{|T|}\}$ of triangles $t \in \V^3$, such that $t_i = \{v_{i_1}, v_{i_2}, v_{i_3}\}$ describes a triangle with vertices $v_{i_1}$, $v_{i_2}$ and~$v_{i_3}$. Also write $T(\M)$ to denote $\M$'s triangles.
\end{definition}

Given a mesh, one specifies the set of basis functions $\{f_i\}_{i=1}^N$ and test functions $\{g_i\}_{i=1}^N$ to discretize the current density as explained in \Cref{section: bem}.
The standard choice is the \emph{Rao--Wilton--Glisson} (RWG) basis~\cite{rao1982electromagnetic}.
Let $t_+$ and $t_-$ be two triangles sharing the edge $e = \{v_i, v_j\} = t_+ \cap t_-$.
Let $\ell_e = \|v_i - v_j\|$ be the length of $e$, and let $A_\pm$ be the area of $t_\pm$.
Lastly, let $v_\pm$ be the vertex of $t_\pm$ opposite the shared edge $e$.
Then the RWG function associated with edge $e$ is
\begin{align}
\label{eq: rwg}
    f_e(x) = \begin{cases}
        \frac{\ell_e}{2A_+} (x - v_+), & \text{ if } x \in \conv{t_+},\\
        \frac{\ell_e}{2A_-} (v_- - x), & \text{ if } x \in \conv{t_-},\\
        0, & \text{ otherwise},
    \end{cases}
\end{align}
This is a piecewise linear vector-valued function with the property that $f_i(x)$ is tangent to $t_\pm$ for $x \in t_\pm$.
See \Cref{fig: rwg} for a graphical representation of the RWG basis function.
We write for an RWG function $f$ the set of triangles on which $f$ is supported as $\supp(f)$.

One shortcoming of the standard RWG basis set is that the functions have zero normal component on the boundary of a given mesh.
As a consequence, it is not possible to use them to model a current density that enters a domain from an external source.
This can be overcome by augmenting a basis set with a ``half-RWG'' function~\cite{ulrich2002efficient}.
If an edge $e$ lies on the boundary of the mesh, one associates with $e$ a basis function $f_e^\text{half}$, which is simply the standard RWG basis function with only one active branch of \Cref{eq: rwg}.

With these basis functions in place, it remains to choose a set of test functions $\{g_i\}_{i=1}^N$.
The most common choice, and the one we make here, is a Galerkin formulation (so $g_i = f_i$, see also \Cref{section: bem}).

\begin{figure}
\centering
\begin{subfigure}[t]{0.4\linewidth}
\centering
        \begin{tikzpicture}
        [
          x={(1cm,0cm)}, y={(-0.6cm,-0.35cm)}, z={(0cm,1cm)},
          scale=0.4
        ]
        
        \tikzset{stlfacet/.style={draw=black, line join=round}}
        
        \draw[red!50, fill] (2.6,2.6,4) -- (2,2,5) -- (2.4839,1.3486,4.1935) -- cycle;
        \draw[red!50, fill] (2,2,5) -- (2.6,2.6,4) -- (1.1335,2.5127,4.1456) -- cycle;
        
        \input{flare.tikz}
        \end{tikzpicture}
\caption{
\label{fig: mesh}
    A generic three-dimensional shape with corresponding surface mesh.
    Highlighted in red is the support of one RWG basis function on this mesh.
}
\end{subfigure}
\hfill
\begin{subfigure}[t]{0.4\linewidth}
\centering
\begin{tikzpicture}[x={(-10:1cm)},y={(90:1cm)},z={(215:1cm)}]

    \def\length{0.08};
    \draw (2,0,0) node[right] {$x$} -- (0,0,0);
    \draw (0,0,0) -- (0,2,0) node[above] {$y$};
    \draw (0,0,0) -- (0,0,2) node[left] {$z$};

    \def\vX{-0.7}; \def\vY{1}; \def\vZ{0};
    \def\uX{2.8}; \def\uY{1}; \def\uZ{0};

    \def\eoneX{1.2}; \def\eoneY{2}; \def\eoneZ{-1};
    \def\etwoX{2.2}; \def\etwoY{2}; \def\etwoZ{1};

    \coordinate (v) at (\vX, \vY, \vZ);
    \node[left] at (v) {$v_+$};
    
    \coordinate (u) at (\uX, \uY, \uZ);
    \node[right] at (u) {$v_-$};
    
    \coordinate (e1) at (\eoneX, \eoneY, \eoneZ);
    \coordinate (e2) at (\etwoX, \etwoY, \etwoZ);
    
    \draw[red!50, fill] (barycentric cs:v=1,e1=0,e2=0) -- node[above] {$t_+$} (barycentric cs:v=0,e1=1,e2=0) -- (barycentric cs:v=0,e1=0,e2=1) -- cycle;
    \draw[red!50, fill] (barycentric cs:u=1,e1=0,e2=0) -- node[right] {$t_-$} (barycentric cs:u=0,e1=1,e2=0) -- (barycentric cs:u=0,e1=0,e2=1) -- cycle;
    \draw[thick] (barycentric cs:v=1,e1=0,e2=0) -- node[above] {$t_+$} (barycentric cs:v=0,e1=1,e2=0) -- (barycentric cs:v=0,e1=0,e2=1) -- cycle;
    \draw[thick] (barycentric cs:u=1,e1=0,e2=0) -- node[right] {$t_-$} (barycentric cs:u=0,e1=1,e2=0) -- (barycentric cs:u=0,e1=0,e2=1) -- cycle;
    
    \foreach \a/\b/\c in {
        0.125/0.125/0.750,
        0.125/0.250/0.625,
        0.125/0.375/0.500,
        0.125/0.500/0.375,
        0.125/0.625/0.250,
        0.125/0.750/0.125,
        0.250/0.125/0.625,
        0.250/0.250/0.500,
        0.250/0.375/0.375,
        0.250/0.500/0.250,
        0.250/0.625/0.125,
        0.375/0.125/0.500,
        0.375/0.250/0.375,
        0.375/0.375/0.250,
        0.375/0.500/0.125,
        0.500/0.125/0.375,
        0.500/0.250/0.250,
        0.500/0.375/0.125,
        0.625/0.125/0.250,
        0.625/0.250/0.125,
        0.750/0.125/0.125
    } {
        \pgfmathsetmacro{\px}{\a*\vX + \b*\eoneX + \c*\etwoX}
        \pgfmathsetmacro{\py}{\a*\vY + \b*\eoneY + \c*\etwoY}
        \pgfmathsetmacro{\pz}{\a*\vZ + \b*\eoneZ + \c*\etwoZ}
        
        \draw[->] 
               ({\px-\length*(\px-\vX)},{\py-\length*(\py-\vY)},{\pz-\length*(\pz-\vZ)}) 
            -- ({\px+\length*(\px-\vX)},{\py+\length*(\py-\vY)},{\pz+\length*(\pz-\vZ)});
        
        \pgfmathsetmacro{\qx}{\a*\uX + \b*\eoneX + \c*\etwoX}
        \pgfmathsetmacro{\qy}{\a*\uY + \b*\eoneY + \c*\etwoY}
        \pgfmathsetmacro{\qz}{\a*\uZ + \b*\eoneZ + \c*\etwoZ}
        
        \draw[->] 
               ({\qx-\length*(\uX-\qx)},{\qy-\length*(\uY-\qy)},{\qz-\length*(\uZ-\qz)})
            -- ({\qx+\length*(\uX-\qx)},{\qy+\length*(\uY-\qy)},{\qz+\length*(\uZ-\qz)});
    }
    
\end{tikzpicture}
\caption{
\label{fig: rwg}
The vector field defined by an RWG basis function on two adjacent triangles $t_+$ and $t_-$.}
\end{subfigure}

\caption{An illustration of the relationship between a mesh (a) and an individual RWG basis function (b).}
\end{figure}

We denote by $B_\M$ the RWG basis set associated with mesh $\M$.
Using the machinery from \Cref{section: bem}, we can then speak of the linear system associated with $\M$:
\[
    Z_\M I = V_\M, 
\]
where $Z_\M$ and $V_\M$ are obtained from \Cref{eq: Z definition,eq: V definition} by using the basis functions $f \in B_\M$.
Similarly, given $I$ satisfying the above, the electric field generated by $I$ in the direction $(\vartheta, \varphi)$ is
\[
    c_\M(\vartheta, \varphi)^T I, 
\]
where the coefficients $c_\M(\vartheta, \varphi)$ are obtained from \Cref{eq: far field coefficients} using the basis functions $f \in B_\M$.

Finally, since we aim to optimize the shape of an antenna, it is useful to be able to speak of a sub-mesh of a mesh.

\begin{definition}
    We say that a mesh $\M' = (\V', T')$ is a sub-mesh of $\M = (\V, T)$, written $\M' \subseteq \M$, if $\V' \subseteq \V$ and $T' \subseteq T$.
\end{definition}

\section{Formulating Antenna Design as Mathematical Program}
\label{section: formulation}
We will first discuss some useful facts and new results that are crucial to formulate our antenna design problem as a MISOCP. Subsequently we enhance that basic formulation to eventually make it computationally tractable.
\subsection{The Crucial Trick: A Charnes--Cooper Theorem for Complex Variables} 
\label{section: charnescooper}

This section introduces one of the core ideas that enables the overall approach taken in this paper, namely, to linearize the objective function which by default would be the maximization of the absolute value of a complex number, and hence not tractable.  

For given $A \in \C^{m\times n}$, $b\in \C^m$, we would like to be able to solve programs of the form

\begin{maxi*}|s|
    {}{|c^Tx|}{}{}\label{eq: program P}\tag{P}
    \addConstraint{Ax}{= b}{}
    \addConstraint{x}{\in \C}{}
\end{maxi*}
Note that the objective of this maximization problem is non-concave, and all variables and coefficients are complex-valued. 
Our goal, however, is a tractable mathematical program over the reals. 

The first step, reformulating a complex valued program in the reals is not that difficult; see, e.g.,  \cite{GOEMANS2004442}, where a similar embedding for complex-valued semidefinite programming is used.
To that end, recall that we denote by $\Re$ and $\Im$ the real and imaginary part of a complex number, vector or matrix. Then  we can equivalently represent a linear system $Ax = b$ with $A \in \C^{m\times n}$, $x \in \C^n$, $b \in \C^m$ as as an ordinary linear system over $\R$ via
\[
\begin{pmatrix}
    \Re{A} & -\Im{A}\\
    \Im{A} & \Re{A}
\end{pmatrix}
\begin{pmatrix}
    \Re{x}\\
    \Im{x}
\end{pmatrix}
=
\begin{pmatrix}
    \Re{b}\\
    \Im{b}
\end{pmatrix}\,.
\]

The second step is more challenging, namely how to handle the objective $|c^Tx|$ of \eqref{eq: program P} which is the absolute value of a complex number, hence nonlinear. To fix this issue, the main trick is the following: We introduce an auxiliary complex variable $z$ that shifts the phase of complex vector $b$ (and via $Ax=b$  also $x$) so as to make $\Im c^Tx =0$, so that $c^Tx$ becomes hence real-valued. Note that the necessary phase shift is clearly not known a priori, which is why $z$ needs to be introduced as an auxiliary variable. 

It is this idea that is essential for obtaining a linear objective, making the overall approach viable at all.
Mathematically speaking, this works as long as our objective is to maximize $|c^Tx|$.  

In the context of the physical application, this freedom is also known as \emph{gauge freedom}; see also the remark after the proof of \Cref{thm: base mip} in \Cref{section: basic model}

This idea leads to the following second-order cone program that can be solved instead of \eqref{eq: program P}:

\begin{maxi*}|s|
    {}{\Re c^T y}{}{}
\label{eq: program Q}
\tag{Q}
    \addConstraint{Ay}{= bz}{}
    \addConstraint{|z|}{\leq 1}{}
    \addConstraint{y}{\in \C^n}{}
    \addConstraint{z}{\in \C.}{}
\end{maxi*}

The following theorem shows that programs \eqref{eq: program P} and \eqref{eq: program Q} are equivalent in the sense that their optimal solutions correspond to each other. Before formulating the theorem, we remark that it is in some sense similar to the Charnes--Cooper transformation for linear fractional programming~\cite{charnes1962programming}. In the Charnes--Cooper transform, the input data is scaled so that the numerator of a fractional objective is one. Here, we shift the phase of the complex-valued input data so that the imaginary part of the objective is zero. 

\begin{theorem}
\label{thm: phase fixing trick}
    Let $(y^*, z^*)$ be an optimal solution of \eqref{eq: program Q}.
    
    \begin{enumerate}
        \item If $|z^*| =1$, then $x^* = y^*/z^*$ is an optimal solution to \eqref{eq: program P} and its objective value equals that of $(y^*, z^*)$ for \eqref{eq: program Q}. That is, 
        \[
        x^* \in \arg \max_{x\in P} |c^Tx| \text{ and } \Re c^Ty^* = |c^Tx^*|\,.
        \]
        \item If $|z^*| < 1$, then either the solution can be scaled to an optimal solution $(x', z')$ with $|z'| = 1$, or \eqref{eq: program P} is infeasible or has optimal objective value 0.
        \item If \eqref{eq: program Q} is unbounded, then \eqref{eq: program P} is unbounded or infeasible.
    \end{enumerate}

    \begin{proof}
        Let $(y^*, z^*)$ be an optimal solution of \eqref{eq: program Q} with $|z^*| = 1$. Then $x^* = y^*/z^*$ is feasible for \eqref{eq: program P}, since 
        \[
        Ax^* = Ay^*/z^* = bz^*/z^* = b,
        \]
        and its objective value is 
        \[
        |c^Tx^*| = |c^Ty^*/z^*| = \Re c^Ty^*,
        \]
        where the final equality follows from $z^*$ being chosen to maximize $c^Ty^*$, hence $\Re c^Ty^* = |c^Ty^*|$ follows from optimality of $(y^*, z^*)$.

        Suppose $x^*$ is not optimal for \eqref{eq: program P}, that is, there exists an $x'$ s.t. $Ax' = b$, $|c^Tx'| > |c^Tx^*|$. 
        Then let 
        \begin{align*}
            z' = \frac{\overline{c^Tx'}}{|c^Tx'|} && y' = x'z'\,.
        \end{align*}
        Clearly $(y', z')$ is feasible for \eqref{eq: program P}, and
        \begin{align*}
        \Re c^Ty' &= \Re \left\{c^Tx'\frac{\overline{c^Tx'}}{|c^Tx'|}\right\} = \Re \left\{\frac{|c^Tx'|^2}{|c^Tx'|}\right\} = |c^Tx'|\\ &> |c^Tx^*| = |c^Ty^*/z^*| = |c^Ty^*| \geq \Re c^Ty^*,
        \end{align*}
        contradicting optimality of $(y^*, z^*)$. So $x^*$ is optimal for \eqref{eq: program P}.

        Now let $|z^*| < 1$ in an optimal solution $(y^*, z^*)$ of \eqref{eq: program Q}. Then either $|z^*| > 0$, and $(y^*/|z^*|, z^*/|z^*|)$ is also a solution with objective 
        \[
        \Re c^Ty^*/|z^*| \geq \Re c^Ty^*,
        \] 
        with inequality if $\Re c^Ty^* >0$, contradicting optimality of $(y^*, z^*)$, and equality if $\Re c^Ty^* = 0$, in which case $(y^*/|z^*|, z^*/|z^*|)$ is also optimal with $|z^*/|z^*|| = 1$.

        The remaining case is when $z^* = 0$ in an optimal solution of \eqref{eq: program Q}. But then $(y^*, z^*)$ is such that $Ay^* = 0$.
        If $\Re c^Ty^* > 0$, then \eqref{eq: program Q} is unbounded, contradicting optimality of $(y^*, z^*)$.
        
        Therefore, $\Re c^Ty^* = 0$. If there existed a solution $x'$ to \eqref{eq: program P} with $|c^Tx'| > 0$, then by similar reasoning as above we could find a solution $(y', z')$ with $\Re c^Ty' >0$, contradicting optimality of $(y^*, z^*)$. So the optimal objective value of \eqref{eq: program P} is 0, or \eqref{eq: program P} is infeasible.

        Now, assume that \eqref{eq: program Q} is unbounded. Then there must exist a $y'$ such that $Ay' = 0$ and $\Re c^Ty' > 0$. If there also exists an $x'$ such that $Ax' = b$, then \eqref{eq: program P} would be unbounded, and if not, it is infeasible.
    \end{proof}
\end{theorem}
Note that $(0, 0)$ is always a solution to \eqref{eq: program Q}, so that it cannot be infeasible. Note also that in any optimal solution $(y^*, z^*)$ to \eqref{eq: program Q}, 
\begin{align*}
    \Im c^Ty^* = 0, \Re c^Ty^* \geq 0, 
\end{align*}
which can be added as constraints to \eqref{eq: program Q} to obtain a stronger formulation without invalidating the above proof.

If one needed to deduce feasibility of \eqref{eq: program P}, one could always attempt to solve the following linear program, feasibility of which directly implies feasibility of \eqref{eq: program P}:
\begin{maxi*}|s|
    {}{0}{}{}
    \addConstraint{Ax}{=b}{}
    \addConstraint{x}{\in \C^n.}{}
\end{maxi*}

We can also combine our technique with that of Charnes--Cooper to cover maximizing the absolute value of a fraction of linear expressions in complex variables, which is then a generalization of Charnes--Cooper. 
The proof of the following theorem is deferred to the appendix.
\begin{restatable}{theorem}{charnescooper}
\label{thm:charnescooper}
For a given matrix $A \in \C^{n\times m}$, vectors $c, d \in \C^m$ and scalars $\alpha, \beta \in \C$, let \eqref{eq: problem R} and \eqref{eq: problem S} be as follows:
    \begin{maxi*}|s|
        {}{\left|\frac{c^Tx + \alpha}{d^Tx + \beta}\right|}{}{}
            \label{eq: problem R}
            \tag{R} 
        \addConstraint{Ax}{=b}{}
        \addConstraint{x}{\in \C^m}
    \end{maxi*}
    \begin{maxi*}|s|
        {}{\Re\{c^T y + \alpha z\}}{}{}
            \label{eq: problem S}
            \tag{S}
        \addConstraint{Ay}{=bz}{}
        \addConstraint{|d^T y + \beta z|}{\leq 1}{}
        \addConstraint{y}{\in \C^m}{}
        \addConstraint{z}{\in \C.}{}
    \end{maxi*}

Extending the usual definitions of infeasibility and unboundedness to \eqref{eq: problem R}, we say that \eqref{eq: problem R} is \emph{infeasible} if there exists no $x \in \C^m$ such that $Ax = b$ and $d^Tx + \beta \neq 0$, and that \eqref{eq: problem R} is \emph{unbounded} if it has no finite maximum, for instance when there exists an \emph{unbounded ray} $x'$ such that $Ax' = 0$, $c^Tx' + \alpha \neq 0$ and $d^Tx' =0$, and an $\hat{x}$ such that $A\hat{x} = b$ and $d^T\hat{x} + \beta \neq 0$.

Then: 
\begin{enumerate}
    \item If $(y^*, z^*)$ is an optimal solution to \eqref{eq: problem S} with $|d^Ty^* +\beta z^*| = 1$ and $|z^*| > 0$, then $x^* = y^*/z^*$ is an optimal solution to \eqref{eq: problem R} and 
    \[
    \Re\{c^Ty^* + \alpha z^*\} = \left|\frac{c^Tx^* + \alpha}{d^Tx^* + \beta}\right|,
    \]
    \item If $(y^*, z^*)$ is an optimal solution to \eqref{eq: problem S} with $|d^Ty^* +\beta z^*| < 1$ or $z^* = 0$, then $(y^*, z^*)$ can be scaled to an optimal solution $(y', z')$ to \eqref{eq: problem S} with $|d^Ty' +\beta z'| = 1$, or \eqref{eq: problem R} is infeasible or has optimal objective value 0, and
    \item If \eqref{eq: problem S} is unbounded, then \eqref{eq: problem R} is infeasible or unbounded.
\end{enumerate}
\end{restatable}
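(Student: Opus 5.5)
The plan mirrors the proof of \Cref{thm: phase fixing trick}, now combining a phase change with a Charnes--Cooper rescaling. The central tool is a lifting map from \eqref{eq: problem R} to \eqref{eq: problem S}. Given $x$ feasible for \eqref{eq: problem R}, that is, $Ax=b$ and $d^Tx+\beta\neq 0$, set
\[
z = \frac{\overline{(c^Tx+\alpha)}}{|c^Tx+\alpha|}\cdot\frac{1}{d^Tx+\beta}, \qquad y = xz .
\]
If $c^Tx+\alpha=0$, replace the first factor by $1$. Then $Ay=bz$ and $|d^Ty+\beta z| = |z|\,|d^Tx+\beta| = 1$. Moreover $\Re\{c^Ty+\alpha z\} = \Re\{z(c^Tx+\alpha)\}$, which equals the objective of \eqref{eq: problem R} at $x$. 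Consequently the optimal value of \eqref{eq: problem S} is at least the value of \eqref{eq: problem R} at every feasible $x$.

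\emph{Part 1.} Let $(y^*,z^*)$ be optimal with $|d^Ty^*+\beta z^*|=1$ and $z^*\neq 0$, and put $x^*=y^*/z^*$. Then $Ax^*=b$. Also $d^Tx^*+\beta = (d^Ty^*+\beta z^*)/z^*\neq 0$, so $x^*$ is feasible. Its objective value is
\[
\frac{|c^Ty^*+\alpha z^*|}{|d^Ty^*+\beta z^*|}=|c^Ty^*+\alpha z^*| .
\]
This is at least $\Re\{c^Ty^*+\alpha z^*\}$. I would first argue equality. If the real part were strictly smaller than the modulus, multiplying $(y^*,z^*)$ by a unit complex number $e^{\jim\theta}$ would keep the solution feasible and raise the objective to the modulus, contradicting optimality. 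Hence the value of \eqref{eq: problem R} at $x^*$ equals the optimal value of \eqref{eq: problem S}. By the lifting map above, no $x$ can do better, so $x^*$ is optimal for \eqref{eq: problem R}.

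\emph{Part 2.} Suppose $0<|d^Ty^*+\beta z^*|=s<1$. Scaling by $1/s$ keeps the solution feasible and multiplies the objective by $1/s$. If the objective is positive, this contradicts optimality. Note that the objective is nonnegative at an optimum, since $(0,0)$ is feasible with value $0$. So the objective is $0$, and the rescaled point is an optimum with $|d^Ty'+\beta z'|=1$.

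Suppose instead $d^Ty^*+\beta z^*=0$. Then $(ty^*,tz^*)$ is feasible for every $t>0$. Optimality therefore forces $\Re\{c^Ty^*+\alpha z^*\}=0$, so the optimal value of \eqref{eq: problem S} is $0$. By the lifting map, every feasible $x$ of \eqref{eq: problem R} then has value $0$, so either \eqref{eq: problem R} is infeasible or its optimal value is $0$. The same argument handles the case $z^*=0$ with $|d^Ty^*|=1$: there the optimal value is some $v\ge 0$. If $v=0$ we are done as before. If $v>0$, I would try to perturb toward a feasible $\hat x$ by adding $\varepsilon(\hat x z_0, z_0)$ and renormalizing, so as to produce an optimum with $z\neq 0$. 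This is the delicate point, and the statement only asks for the disjunction, so I would phrase the conclusion carefully to match it.

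\emph{Part 3.} If \eqref{eq: problem S} is unbounded and \eqref{eq: problem R} is feasible, I would show that \eqref{eq: problem R} has no finite maximum. Suppose \eqref{eq: problem R} had a finite supremum $M$. The expected main obstacle is to derive a contradiction from this. The approach is to take feasible $(y,z)$ of \eqref{eq: problem S} with objective greater than $M$. If $z\neq0$ and $d^Ty+\beta z\neq 0$, then $x=y/z$ has value
\[
\frac{|c^Ty+\alpha z|}{|d^Ty+\beta z|}\ge \Re\{c^Ty+\alpha z\}>M,
\]
which is a contradiction. The degenerate cases ($z=0$ or $d^Ty+\beta z=0$) are handled by adding $\varepsilon(\hat x,1)$ for a feasible $\hat x$. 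By continuity, for small $\varepsilon$ the objective stays above $M$ while both $z$ and $d^Ty+\beta z$ become nonzero, after renormalizing so that $|d^Ty+\beta z|\le1$. This yields the contradiction, so \eqref{eq: problem R} is infeasible or unbounded.
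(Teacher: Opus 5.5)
\textbf{Gap in the lifting map.} Your lifting map has a phase error, and both Part 1 (``no $x$ can do better'') and the case $d^Ty^*+\beta z^*=0$ of Part 2 (``every feasible $x$ has value $0$'') rely on it. With $z=\overline{(c^Tx+\alpha)}/\bigl(|c^Tx+\alpha|\,(d^Tx+\beta)\bigr)$ and $y=xz$ you get $c^Ty+\alpha z=|c^Tx+\alpha|/(d^Tx+\beta)$, hence
\[
\Re\{c^Ty+\alpha z\}=\frac{|c^Tx+\alpha|\,\Re\{d^Tx+\beta\}}{|d^Tx+\beta|^2}.
\]
This equals the (R)-objective $|c^Tx+\alpha|/|d^Tx+\beta|$ only when $d^Tx+\beta$ is a positive real. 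For example, if $d^Tx+\beta=\jim$, the lifted value is $0$ however large the (R)-value is. So, as written, the inequality ``optimal value of (S) $\geq$ value of (R) at $x$'' is not established.

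The repair is to divide by $|d^Tx+\beta|$ instead of $d^Tx+\beta$, which is exactly the paper's choice. Then $c^Ty+\alpha z=|c^Tx+\alpha|/|d^Tx+\beta|$ is real and nonnegative, and still $|d^Ty+\beta z|=|z|\,|d^Tx+\beta|=1$. Equivalently, you can rotate your lifted point by a unit complex number, as you already do in Part 1.

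\textbf{The rest of the proposal.} With that fix, Parts 1 and 2 follow the paper's argument. You merge its Cases 1 and 2 via the rescaling by $1/s$, and you close its Case 3 more directly through the lifting. The case $z^*=0$, $|d^Ty^*|=1$ that you flag as delicate needs no perturbation: the first disjunct of Part 2 holds trivially with scaling factor $1$. (The paper's proof of Part 2 assumes $|d^Ty^*+\beta z^*|<1$ throughout and never treats this case.)

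Part 3 is a genuinely different and valid route. The paper asserts, without justification, that unboundedness of (S) yields a direction $(y',z')$ with $Ay'=bz'$, $d^Ty'+\beta z'=0$ and $\Re\{c^Ty'+\alpha z'\}>0$. It then splits on $z'\neq 0$, approaching the pole $y'/z'$ along a segment from $\hat x$, versus $z'=0$, which gives an unbounded ray.

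You instead argue directly against a finite supremum $M$. Perturbing a feasible point of (S) with value above $M$ by $\varepsilon(\hat x,1)$, and renormalizing if needed, makes both $z$ and $d^Ty+\beta z$ nonzero. Then $|d^Ty+\beta z|\le 1$ gives the point $x=y/z$ an (R)-value of at least $\Re\{c^Ty+\alpha z\}>M$. This avoids the recession-direction claim altogether. It also yields $\sup=\infty$, which is stronger than the ``no finite maximum'' required by the theorem's definition.
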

To streamline the main body of the paper, we provide the proof in Appendix~\ref{app:charnescooper}.

\subsection{The Natural Formulation}
\label{section: basic model}

We now derive an exact MISOCP for antenna topology optimization.
The input to this MISOCP is a mesh $\M$, and an optimal solution is a sub-mesh $\M' \subseteq \M$ that maximizes the magnitude of a given component of the radiated electric field in a given direction; that is, $|e_i \cdot \EFF(\vartheta, \varphi)|$.
Using the machinery from \Cref{section: electromagnetism,section: bem,section: discrete geometry}, we can write this antenna design problem as 

\begin{maxi*}|s|
    {\scriptstyle \M' \subseteq \M}{|c_{\M'}^T I|}{}{}
        \label{eq: natural formulation}
        \tag{N}
    \addConstraint{Z_{\M'}I}{=V_{\M'}}{}
    \addConstraint{I}{\in \C^{B_{\M'}}.}{}
\end{maxi*}

While this is a natural formulation of the mesh-selection problem outlined at the end of \Cref{section: discrete geometry}, it is not in the form of a typical mathematical program.
In particular, we note that the dimension of the variable vector $I$ depends on the feasible solution $\M' \subseteq \M$.
It takes a some work to reformulate this program in a more familiar way.
Moreover, it is important to recall that all numerical values are complex-valued, hence the absolute value $|\cdot |$ is the square root of a quadratic expression in the real and imaginary parts of a complex number. Therefore, our main result in this section is that this program can be formulated as a nearly-linear MIP,  that is, a linear MIP with only a single convex quadratic constraint.
\begin{theorem}\label{thm: base mip}
    Fix a mesh $\M = (\V, T)$, and let $Z = Z_\M$ and $V = V_\M$.
    The optimal objective value of the antenna design problem  
    \[
    \max \{|c_{\M'}^T I| \, : \, \M'\subseteq \M, \, Z_{\M'}I = V_{\M'}, \, I \in \complex^{B_{\M'}}\}
    \]
    is equal to the optimal objective value of
    \begin{maxi*}|s|
        {x \in \{0, 1\}^{T(\M)}}{\Re\{c_\M^T I\}}{}{}
        \addConstraint{y_f = 1}{\implies Z_{f, *}I = V_f \cdot z}{\quad \forall f \in B_\M}
        \addConstraint{y_f = 0}{\implies I_f = 0}{\quad \forall f \in B_\M}
        \addConstraint{\prod_{t \in \supp(f)} x_t}{= y_f}{\quad \forall f \in B_\M}
        \addConstraint{|z|}{\leq 1}{\quad z \in \complex.}
    \end{maxi*}
\end{theorem}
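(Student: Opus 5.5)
The plan is to prove the equality in two stages. First, for each fixed sub-mesh, we reduce the inner problem to an instance of \eqref{eq: program Q} and apply \Cref{thm: phase fixing trick}. Second, we show that the indicator constraints in the MIP encode exactly the passage from $\M$ to a sub-mesh. The key structural fact is that the RWG basis $B_{\M'}$ of a sub-mesh can be identified with the subset of $B_\M$ whose supports lie entirely in $T(\M')$. This requires that RWG and half-RWG functions are defined by the triangles in their support; we take $\M'$ to be determined by its triangle set, since extra isolated vertices are irrelevant. Under this identification, $Z_{\M'}$ and $V_{\M'}$ are the principal submatrix and subvector of $Z_\M$ and $V_\M$ indexed by $B_{\M'}$. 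This holds because $Z_{ij}=\langle \mathcal{L} f_i, f_j\rangle$ and $V_j$ depend only on $f_i$ and $f_j$ themselves. Likewise, $c_{\M'}$ is the restriction of $c_\M$. I would state this as a short lemma.

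\textbf{Correspondence.} Given binary $x\in\{0,1\}^{T(\M)}$, let $\M'$ be the sub-mesh with triangle set $\{t : x_t=1\}$. Then $y_f=\prod_{t\in\supp(f)}x_t$ equals $1$ exactly when $f\in B_{\M'}$. The constraint $y_f=0\implies I_f=0$ forces $I$ to be supported on $B_{\M'}$. Consequently, for $f\in B_{\M'}$ the row $Z_{f,*}I$ equals $(Z_{\M'}I_{B_{\M'}})_f$, and the objective $\Re\{c_\M^T I\}$ equals $\Re\{c_{\M'}^T I_{B_{\M'}}\}$. For a fixed $x$, the MIP is therefore exactly program \eqref{eq: program Q} with $A=Z_{\M'}$, $b=V_{\M'}$ and $c=c_{\M'}$. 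Conversely, every sub-mesh $\M'$ arises from some $x$; the discrepancy in vertex sets is harmless. Hence the MIP optimum is the maximum over $\M'$ of the optimum of \eqref{eq: program Q} for that $\M'$, while the design problem is the maximum over $\M'$ of the optimum of \eqref{eq: program P} for that $\M'$. Both maxima are over finitely many meshes.

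\textbf{Per-mesh equality.} It remains to show that, for each fixed $\M'$, $\mathrm{OPT}(P)=\mathrm{OPT}(Q)$, with the convention that an infeasible \eqref{eq: program P} contributes nothing. The inequality $\mathrm{OPT}(Q)\ge |c^Tx|$ for any feasible $x$ of \eqref{eq: program P} follows from the construction in the proof of \Cref{thm: phase fixing trick}: set $z=\overline{c^Tx}/|c^Tx|$ and $y=xz$, taking $z=1$ if $c^Tx=0$. For the reverse inequality, take an optimal $(y,z)$ of \eqref{eq: program Q}. By part 1 of \Cref{thm: phase fixing trick}, or part 2 after rescaling, either $|z|=1$ and $y/z$ attains the same value in \eqref{eq: program P}, or the value of \eqref{eq: program Q} is $0$.

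\textbf{Main obstacle.} The hard part is the degenerate cases of \Cref{thm: phase fixing trick}: infeasible \eqref{eq: program P} for some $\M'$, and unboundedness when $Z_{\M'}$ is singular. A sub-mesh whose $Z_{\M'}$ has a kernel vector $y$ with $\Re c^Ty\neq 0$ would make \eqref{eq: program Q} unbounded even when \eqref{eq: program P} is infeasible. If \eqref{eq: program P} is feasible there, both problems are unbounded, and the equality holds trivially. If it is infeasible, the MIP value could exceed the design optimum. I would resolve this by invoking the physical fact that the EFIE Galerkin matrix $Z_{\M'}$ is nonsingular for every sub-mesh, which guarantees a unique current. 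Then \eqref{eq: program P} is always feasible and no kernel exists, so the unbounded case never occurs. Granting this, each $\M'$ with value $0$ in \eqref{eq: program Q} also has value $0$ in \eqref{eq: program P}, since its unique feasible current gives $c^TI=0$. Taking the maximum over $x$ then completes the proof.
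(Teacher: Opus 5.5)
Your proposal is correct and follows essentially the paper's route. Identifying $B_{\M'}$ with the functions of $B_\M$ supported in $T(\M')$ (so that $Z_{\M'}$, $V_{\M'}$, $c_{\M'}$ are restrictions) is \Cref{lem: base mip constraints}, and fixing $x$ and applying \Cref{thm: phase fixing trick} to the resulting instance of \eqref{eq: program Q} is exactly the paper's bilevel argument; where you go beyond the paper is the degenerate case in which \eqref{eq: program Q} is unbounded for a sub-mesh on which \eqref{eq: program P} is infeasible, which the paper's proof passes over by using \Cref{thm: phase fixing trick} as a plain equivalence, and your nonsingularity of $Z_{\M'}$ is a hypothesis the paper never states, though more than you need, since $c_{\M'}^T k = 0$ for all $k \in \ker Z_{\M'}$ already rules that case out (and this should hold physically, since a current in the kernel ought to radiate no power).
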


The proof of \Cref{thm: base mip} follows from \Cref{lem: base mip constraints} in the next section.

\subsection{Almost-Linear MIP for Antenna Optimization}

We write the mesh-selection problem in a more standard MIP form, by introducing binary variables to represent the triangles of the mesh.
This requires some care, since the matrix $Z_\M$ is indexed not by $T(\M)$ but by the basis functions $B_\M$; thus, the presence or absence of a basis function $f \in B_{\M'} \subseteq B_\M$ must be linked to the binary variables $x_t$ for $t \in \supp(f)$.
\Cref{lem: base mip constraints} summarizes this for an arbitrary $\M$.

\begin{lemma}\label{lem: base mip constraints}
    Let $\M = (\V, T)$ be a mesh.
    Then the feasible region of the antenna optimization problem (\ref{eq: natural formulation})
    \[
        P_0 = \{(\M', I) \, : \, 
            \M' \subseteq \M, \, 
            Z_{\M'} I = V_{\M'}, \, 
            I \in \complex^{\M'}
            \}
    \]
    is isomorphic to
    \begin{align*}
        P_1 = \left\{(x, I) \in \{0, 1\}^{T} \times \complex^{B_\M}\, : \,
            \left(\prod_{t \in \supp(f)} x_t = 1 \implies Z_{f, *} I = V_f\right),\right.\\ 
            \left.\left(\prod_{t \in \supp(f)} x_t = 0 \implies I_f = 0\right)\right\}\,.
    \end{align*}
\end{lemma}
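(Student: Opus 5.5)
We prove the lemma by writing down an explicit bijection $\Phi: P_0 \to P_1$ and checking that it preserves the quantities used in the objective. The one modelling convention we fix is that a sub-mesh is determined by its triangle set: $\M'$ is identified with $T' = T(\M') \subseteq T$, and $\V'$ is the set of vertices of triangles in $T'$. The lemma is about the combinatorial selection of triangles, so isolated vertices play no role. Under this convention the characteristic vector $x \in \{0,1\}^T$ of $T'$ corresponds one-to-one with sub-meshes.

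The first and main step is the identity
\[
B_{\M'} = \Bigl\{ f \in B_\M : \prod_{t \in \supp(f)} x_t = 1 \Bigr\},
\]
together with the entrywise compatibility of $Z$ and $V$. For the set identity, we use that an RWG function (or half-RWG function) on $\M'$ is attached to an edge $e$ and the triangles $t_\pm$ (respectively the single triangle) adjacent to it. Such a function lies in $B_{\M'}$ exactly when its whole support lies in $T'$, i.e.\ when $\prod_{t\in\supp(f)} x_t = 1$. The formula \eqref{eq: rwg} depends only on the two supporting triangles, so the function obtained is the same function $f \in B_\M$.

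This step is the expected obstacle because of boundary effects. An edge that is interior in $\M$ but becomes a boundary edge of $\M'$ might, by the half-RWG convention, carry a half basis function on $\M'$ that has no counterpart with the same support in $B_\M$. We will resolve this by adopting the convention that $B_\M$ contains the half-RWG functions for every edge of $\M$ where they may be needed, or that half-RWGs are only placed at feed edges on the boundary of $\M$. We state this explicitly.

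For compatibility, since $Z_{ij} = \langle \mathcal{L} f_i, f_j\rangle$ and $V_j = \langle (\Einc)_\parallel, f_j\rangle$ depend only on the functions $f_i, f_j$ and not on the surrounding mesh, we get $Z_{\M'} = Z_\M[B_{\M'}, B_{\M'}]$ and $V_{\M'} = V_\M[B_{\M'}]$. The same argument gives $c_{\M'} = c_\M[B_{\M'}]$.

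The bijection is then $\Phi(\M', I) = (x, \tilde I)$, where $x$ is the indicator vector of $T'$ and $\tilde I$ extends $I$ by zeros on $B_\M \setminus B_{\M'}$. We check membership in $P_1$. If $\prod_{t \in \supp(f)} x_t = 0$, then $f \notin B_{\M'}$ and $\tilde I_f = 0$ by construction. If the product equals $1$, then $f \in B_{\M'}$ and
\[
Z_{f,*}\tilde I = \sum_{g \in B_{\M'}} (Z_\M)_{fg} I_g = (Z_{\M'} I)_f = (V_{\M'})_f = V_f,
\]
because the entries outside $B_{\M'}$ vanish. Conversely, given $(x, I) \in P_1$, let $T'$ be the support of $x$ and restrict $I$ to the coordinates $f$ with product $1$. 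The same computation, read backwards, shows $Z_{\M'} I|_{B_{\M'}} = V_{\M'}$, using that the forced zeros kill all other columns. The two maps are mutually inverse.

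Finally, $c_\M^T \tilde I = c_{\M'}^T I$, so $\Phi$ preserves the objective. Combining this with \Cref{thm: phase fixing trick}, applied for fixed $x$ with $V$ replaced by $V z$, yields \Cref{thm: base mip}.
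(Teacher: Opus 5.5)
Your proposal is correct and takes essentially the same route as the paper: the incidence vector of the selected triangles, zero-extension of $I$, and the same row computation $Z_{f,*}\tilde I=(Z_{\M'}I)_f=V_f$. Beyond the paper, you make explicit the facts $B_{\M'}=\{f\in B_\M:\supp(f)\subseteq T(\M')\}$, $Z_{\M'}=Z_\M[B_{\M'},B_{\M'}]$ and $V_{\M'}=V_\M[B_{\M'}]$, which the paper uses tacitly. Of your two half-RWG conventions, commit to the second (half-RWGs only at feed edges on the boundary of $\M$). Under the first, take a half-RWG on an interior edge of $\M$ whose single support triangle and its neighbour are both selected: it has product $1$ yet is not in $B_{\M'}$, so the basis-set identity, and with it the isomorphism, would fail.
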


\begin{proof}
    We identify an isomorphism $\phi : P_0 \to P_1$.
    A mesh $\M' \subseteq \M$ is mapped to a binary vector $x \in \{0, 1\}^T$ by setting $x_t = 1$ if and only if $t \in T(\M')$; that is, $x$ is the incidence vector of the sub-mesh $\M'$ of $\M$.
    A vector $I \in \complex^{\M'}$ is mapped to $\tilde I \in \complex^M$ by setting 
    \[
        \tilde I_f = \begin{cases} 
            I_f & \text{if $f \in B_{\M'}$}, \\
            0 & \text{otherwise}.
        \end{cases}
    \]
    We define the inverse map $\phi^{-1}$ in the obvious way: For $(x, \tilde I) \in P_1$, let $(\M', I) = \phi^{-1}(x, \tilde I)$ be given by
    \[
        \M' = (\V, \{t \in T(\M) \, : \, x_t = 1\})
    \]
    and $I_f = \tilde I_f$ for each $f \in B_{\M'}$.
    Let $(\M', I) \in P_0$.
    Then $Z_{\M'} I = V_{\M'}$, where for each $f_i, f_j \in B_{\M'}$
    \[
        Z_{\M', f_i f_j} = \left\langle \mathcal{L} f_i, f_j\right\rangle, 
        \quad
        V_{\M', f_j} = \langle n \times E^{\text{inc}}, f_j \rangle.
    \]
    We must show that $(x, \tilde{I}) = \phi(\M', I) \in P_1$.

    Let $f \notin B_{\M'}$.
    We have
    \[
        \prod_{t \in \supp{f}} x_t = 0,
    \]
    since $f \notin B_{\M'}$ implies that for some $t \in \supp(f)$ we have $t \notin T(\M')$.
    Moreover, $\tilde{I}_f = 0$ by construction.

    On the other hand, for $f \in B_{\M'}$, we have $\prod_{t \in \supp(f)} x_t = 1$, and
    \begin{align}\label{eq: Z constraints}
        Z_{\M, f, *}\tilde I &= \sum_{f_j \in B_\M} Z_{\M, f, f_j} \tilde I_{f_j}\notag\\
            &= \sum_{f_j \in B_{M'}} Z_{\M, f, f_j} \tilde I_{f_j}
                + \sum_{f_j \in B_\M \setminus B_{\M'}} Z_{\M, f, f_j} \underbrace{\tilde I_{f_j}}_{= 0}\\
            &= \sum_{f_j \in B_{M'}} Z_{\M, f, f_j} \tilde I_{f_j} \nonumber 
            = Z_{\M', f, *} I 
            = V_f.
    \end{align}
    This shows that $(x, \tilde I) \in P_1$ as desired.
    For the other direction, we must show that $\phi^{-1}(x, \tilde I) = (\M', I) \in P_0$, which follows immediately from \Cref{eq: Z constraints}.
\end{proof}

Now that we have a more standard mathematical program for the antenna optimization problem, we apply \Cref{thm: phase fixing trick} to prove \Cref{thm: base mip}.

\begin{proof}[Proof of \Cref{thm: base mip}]
    To begin, we use \Cref{lem: base mip constraints} to transform the formulation (\ref{eq: natural formulation}) into a more standard mathematical program.
    To that end, we transform the feasible set using the isomorphism $\phi$ from \Cref{lem: base mip constraints}, and write the program equivalently as a bilevel optimization problem,
    \begin{maxi*}|s|
        {\scriptstyle x \in \{0, 1\}^{T(\M)}}{\max_{I \in \C^{B_\M}} |c_\M^T I|}{}{}
        \addConstraint{y_f = 1}{\implies Z_{f, *}I = V_f}{\quad \forall f \in B_\M}
        \addConstraint{y_f = 0}{\implies I_f = 0}{\quad \forall f \in B_\M}
        \addConstraint{\prod_{t \in \supp(f)} x_t}{= y_f}{\quad \forall f \in B_\M}.
    \end{maxi*}
    The constraints $\prod_{t \in \supp(f)} x_t = y_f$ can be linearized straightforwardly, since the $x_t$ variables are each binary.
    Since the variables $x \in \{0, 1\}^{T(\M)}$ are fixed in the inner problem, this problem can be written in the form $\max |c^T y| \text{ s.t. } Ay = b$.
    We can then use \Cref{thm: phase fixing trick} to write the inner problem in the equivalent form
    \begin{maxi*}|s|
        {\scriptstyle I \in \C^{B_\M}}{\Re \{c_\M^T I\}}{}{}
        \addConstraint{y_f = 1}{\implies Z_{f, *}I = V_f\cdot z}{\quad \forall f \in B_\M}
        \addConstraint{y_f = 0}{\implies I_f = 0}{\quad \forall f \in B_\M}
        \addConstraint{\prod_{t \in \supp(f)} x_t}{= y_f}{\quad \forall f \in B_\M}
        \addConstraint{|z|}{\leq 1}{\quad z \in \C.}
    \end{maxi*}
    Writing the problem as a single-level optimization problem over $x$ again completes the proof.
\end{proof}

\begin{remark}
    \label{remark:gauge freedom}
    In the case of antenna optimization, the reformulation in this proof has a physical interpretation.
    If $J$ is a current distribution on the surface of an antenna that produces the far field $E_\mathrm{FF}$, then the distribution $J' = e^{\jim \phi}J$ produces the field $E_\mathrm{FF}' = e^{\jim \phi} E_\mathrm{FF}$.
    For the radiation characteristics of the antenna, this change is immaterial, since the relevant quantities are independent of this phase change.
    The change $J \to e^{\jim \phi} J$ can therefore be seen as a \emph{gauge transformation}~\cite{griffithsIntroductionElectrodynamics2023}, and adding the variable $z \in \complex$ with $|z| = 1$ to impose $\Im\{\EFF\} = 0$ is a form of \emph{gauge fixing}.
\end{remark}

\subsection{Additional constraints}

The MIP derived in \Cref{thm: base mip} is an exact formulation for finding the sub-mesh $\M' \subseteq \M$ that maximizes the far-field amplitude in a fixed direction, subject to a fixed incident electric field.
In practical cases however, various other design considerations must be taken into account to produce a useful antenna design.
We outline two such design considerations: Geometric and impedance constraints.
Geometric constraints are easily realizable as linear constraints.
To constrain impedance, we require a single additional second-order cone constraint.

\subsubsection{Connectivity constraints}
\label{sec: connectivity}
\Cref{thm: base mip} has in its feasible region all sub-meshes $\M' \subseteq \M$, providing a great deal of flexibility.
In some cases, this is not desirable, for instance, when an optimal solution consists of many disjoint components.
Such topologies may be difficult to manufacture, and can moreover hinder the search for a good MIP solution.
There may also be geometric configurations that one wishes to exclude, such as disconnecting the feed point from the rest of the mesh.

Any geometric constraint that can be efficiently encoded as a graph topology constraint is easily incorporated into \Cref{thm: base mip}.
Given a mesh $\M = (T, \V)$, we construct the graph $G_\M$ with vertex set $V(G_\M) = T$ and edge set
\[
    E(G_\M) = \{\{t_i, t_j\} : \text{$t_i$ and $t_j$ share an edge in $\M$}\}.
\]
Selecting a sub-mesh $\M' \subseteq \M$ is equivalent to selecting a subset of $V(G_\M)$, allowing an easy translation of topological mesh constraints to graph topology constraints.

As a concrete example, we consider connectivity constraints: We only wish to select meshes $\M'$ for which $G_{\M'}$ is connected.
Since the feed region of an antenna is surely selected in any optimal solution, we can immediately say that any triangles touching the feed region must be selected.
Collecting these triangles into a set $F \subseteq T$, we have
\[
    x_t = 1, \text{ for each $t \in F$}.
\]
Then we can apply a flow formulation for the connectivity constraint.
For every edge $\{t_i, t_j\} \in E(G_\M)$, we define a forward and backward flow variable $f_{ij},f_{ji}\in \real_{\geq 0}$.
We associate with every $t \in T \setminus F$ a source $x_t$, and enforce flow conservation at every triangle $t \in T \setminus F$:
\[
    \sum_{i \in N(j)} (f_{ji} -f_{ij}) = x_j, \text{ for each } j \in T \setminus F
\]
We also enforce
\begin{align*}
    f_{ij} \leq M x_{i} &\text{ for each $i \in T$}, j \in N(i),\\
    f_{ij} \leq M x_{j} &\text{ for each $i \in T$}, j \in N(i),
\end{align*}
which couples the flow to the presence of the triangles between which it runs.

Triangles in $F$ act as sinks; we enforce there
\begin{align*}
    \sum_{i \in N(j)} f_{ji} &= 0, &\text{ for each $j \in F$},\\
    \sum_{j\in F}\sum_{i \in N(j)}f_{ij} &=\sum_{k\in T \setminus F} x_k, &\text{ for each $j \in F$}.
\end{align*}
The first constraint enforces that no flow exits a feed triangle (which helps constrain the number of symmetric solutions in the case of multiple connected feed triangles), and the second constraint ensures flow conservation by implying
\[
0\leq\sum_{i \in N(j)} f_{ij} \leq \sum_{k\in T \setminus F} x_k, \text{ for each $j \in F$},\\
\]
and ensuring that the total outflow equals the total inflow.
These constraints are sufficient to ensure that the selected triangles form a connected region.

\subsubsection{Symmetry}
\label{section: symmetry}
For certain types of antennas (like the ones we will consider in \Cref{section: results}), it stands to reason that there exists an optimal symmetric design, and that many nearly optimal, slightly asymmetric designs also exist.

We can therefore significantly reduce the complexity of optimization by enforcing symmetry in these designs - both in the design variables $x_j, j \in T$ and the current coefficients $I \in \C^{B_\M}$ (as the current distributions on these antennas will then also be symmetric, if they are fed from the center).
The principle is that one chooses one side of the antenna, and expresses the current coefficients for one side in terms of those for the other side. One can then compress the BEM system, so that it is written purely in terms of the variables corresponding to one side of the antenna; the optimization process then proceeds with approximately half the number of variables.
\subsubsection{Impedance constraints}
\label{sec: impedance}
One of the most critical characteristics of an antenna is its \emph{input impedance}.
This is defined simply as the ratio of the antennas driving voltage to the (complex) current it draws:
\begin{align}
\label{eq: impedance definition}
    Z_\mathrm{in} := \frac{V_\mathrm{in}}{I_\mathrm{feed}}.
\end{align}
When $V_\mathrm{in}$ is measured in Volts ($\mathrm{V}$) and the current in Amperes ($\mathrm{A}$), the impedance is measured in Ohms ($\Omega$).
The feed current $I_\mathrm{feed}$ is computed as
\[
    I_\mathrm{feed} = \int_{\Gamma_\mathrm{feed}} J(x) \cdot n(x)\dd{x},
\]
where $\Gamma_\mathrm{feed}$ is a geometric line across the feed region of the antenna, and $n$ is a unit normal vector to $\Gamma_\mathrm{feed}$ pointing into the antenna.
That is, $I_\mathrm{feed}$ is the total current crossing $\Gamma_\mathrm{feed}$.
Inserting $J = \sum_{f \in B_\M} I_f f$,
\[
    I_\mathrm{feed} = \sum_{f \in B_\M} I_f \int_{\Gamma_\mathrm{feed}} f(x) \cdot n(x)\dd{x} = c_\mathrm{feed}^T I,
\]
with $c_{\mathrm{feed}, f} = \int_{\Gamma_\mathrm{feed}} f(x) \cdot n(x)\dd{x}$.
Thus, $I_\mathrm{feed}$ is a (complex) linear form in the current coefficients $I$.

The antenna is connected to its power source via a \emph{feed line}, which itself has a characteristic impedance $Z_0$; typically $Z_0 = 50 \, \Omega$.
If $Z_\mathrm{in}$ is very near $Z_0$, then the antenna is \emph{well-matched}.
This implies that most of the power delivered to the antenna is accepted by the antenna, and (if the material is electrically lossless) eventually radiated.
Conversely, if the antenna is badly matched (so $Z_\mathrm{in} \not\approx Z_0$), then much of the delivered power is \emph{reflected} back to the feed line, where it is eventually dissipated as heat.

This has practical consequences for the program of \Cref{thm: base mip}.
Even if we obtain an optimal solution with a good objective value--that is, with a high-amplitude radiated field--this may come at the cost of a very inefficient antenna.
In a physical antenna this high field amplitude would be achieved simply by drawing a large amount of energy from the power source, but reflecting most of it instead of radiating it out.
This effect is not taken into account in the model described in \Cref{section: prelims}, since this model only sees the current accepted by the antenna.
Therefore, any antenna optimization model is incomplete without incorporating a constraint on the impedance.

Therefore, we introduce the additional second-order cone constraint
\begin{align}
\label{eq: impedance lower bound}    
    |I_\mathrm{feed}| \leq \frac{V_\mathrm{in}}{|Z_0|},
\end{align}
which guarantees that
\[
|Z_\mathrm{in}| \geq |Z_0|.
\]
This constraint has the limitation that it does not separately constrain the real and imaginary parts of the impedance; therefore, $Z_\mathrm{in} = 50 \, \Omega$, $Z_\mathrm{in} = 50\jim \Omega$ and $Z_\mathrm{in} = \sqrt{50} + \sqrt{50} \jim \, \Omega$ all satisfy this constraint, while only the first is desirable.
In many of our test cases, however, we have found that this constraint tends to yield optimal designs with $Z_\mathrm{in} \approx Z_0 = 50 \, \Omega$, without the need to set explicit constraints on the imaginary part of the impedance.

In our second paper~\cite{hoeksma2026physics}, we introduce an efficient method of constraining the real and imaginary parts of the input impedance separately.

\section{Results}
\label{section: results}

We will first validate our method by searching for the optimal length of a dipole antenna, and comparing to known literature values. Then, we will optimize a so-called Yagi-Uda antenna.
For both cases, the base mesh $\M$ was generated using the open-source program Gmsh~\cite{geuzaine2009gmsh}.
The electromagnetics simulations necessary to obtain the matrices $Z_M$, were performed using the open-source boundary element method library Bempp-cl~\cite{betcke2021bempp}.
All optimization was performed using Gurobi 11.0.2~\cite{gurobi} on a  system with  32 Intel Xeon Gold 5217 cores and 16 GB of memory.

\subsection{Dipole Antenna}
\label{section: dipole}

We first optimize a microstrip dipole antenna (see \Cref{fig: dipole mesh}). 
This is a simple antenna, consisting of a small strip of perfect electrical conductor. 
The strip has a voltage applied to its center, so that a standing wave of electrical current forms on the antenna. 
The dipole antenna is a classical antenna design, and as such has been analyzed also analytically.

In these analyses, a typical approximation is that the current runs solely along the length of the antenna; this is justifiable as the width of the antenna is ``small'' compared to its length.
Under such an assumption, one can compute the length that maximizes the radiated field amplitude perpendicular to the antenna's long axis; literature values depend on the exact width, but typically the optimal length  is between $0.46\lambda$ and $0.48\lambda$, where $\lambda = 2\pi c/\omega$ is the \emph{wavelength} corresponding to the frequency at which we feed the antenna~\cite{balanisAntennaTheoryAnalysis2012}.

\begin{figure}
    \centering
    
    \begin{tikzpicture}
    [
      y={(0cm,1cm)},
      x={(1cm,0cm)},
      z={(0cm,1cm)},
      scale=2.5
    ]
    \tikzset{stlfacet/.style={fill=pec, draw=black, line join=round}}
        \path[stlfacet] (-1.8737,-0.0625,0) -- (-2.2484,0.0625,0) -- (-2.2484,-0.0625,0) -- cycle;
\path[stlfacet] (-1.8737,-0.0625,0) -- (-1.8737,0.0625,0) -- (-2.2484,0.0625,0) -- cycle;
\path[stlfacet] (-1.499,-0.0625,0) -- (-1.8737,0.0625,0) -- (-1.8737,-0.0625,0) -- cycle;
\path[stlfacet] (-1.499,-0.0625,0) -- (-1.499,0.0625,0) -- (-1.8737,0.0625,0) -- cycle;
\path[stlfacet] (-1.1242,-0.0625,0) -- (-1.499,0.0625,0) -- (-1.499,-0.0625,0) -- cycle;
\path[stlfacet] (-1.1242,-0.0625,0) -- (-1.1242,0.0625,0) -- (-1.499,0.0625,0) -- cycle;
\path[stlfacet] (-0.7495,-0.0625,0) -- (-1.1242,0.0625,0) -- (-1.1242,-0.0625,0) -- cycle;
\path[stlfacet] (-0.7495,-0.0625,0) -- (-0.7495,0.0625,0) -- (-1.1242,0.0625,0) -- cycle;
\path[stlfacet] (-0.3747,-0.0625,0) -- (-0.7495,0.0625,0) -- (-0.7495,-0.0625,0) -- cycle;
\path[stlfacet] (-0.3747,-0.0625,0) -- (-0.3747,0.0625,0) -- (-0.7495,0.0625,0) -- cycle;
\path[stlfacet] (0,-0.0625,0) -- (-0.3747,0.0625,0) -- (-0.3747,-0.0625,0) -- cycle;
\path[stlfacet] (0,-0.0625,0) -- (0,0.0625,0) -- (-0.3747,0.0625,0) -- cycle;
\path[stlfacet] (0,-0.0625,0) -- (0.3747,0.0625,0) -- (0,0.0625,0) -- cycle;
\path[stlfacet] (0,-0.0625,0) -- (0.3747,-0.0625,0) -- (0.3747,0.0625,0) -- cycle;
\path[stlfacet] (0.3747,-0.0625,0) -- (0.7495,0.0625,0) -- (0.3747,0.0625,0) -- cycle;
\path[stlfacet] (0.3747,-0.0625,0) -- (0.7495,-0.0625,0) -- (0.7495,0.0625,0) -- cycle;
\path[stlfacet] (0.7495,-0.0625,0) -- (1.1242,0.0625,0) -- (0.7495,0.0625,0) -- cycle;
\path[stlfacet] (0.7495,-0.0625,0) -- (1.1242,-0.0625,0) -- (1.1242,0.0625,0) -- cycle;
\path[stlfacet] (1.1242,-0.0625,0) -- (1.499,0.0625,0) -- (1.1242,0.0625,0) -- cycle;
\path[stlfacet] (1.1242,-0.0625,0) -- (1.499,-0.0625,0) -- (1.499,0.0625,0) -- cycle;
\path[stlfacet] (1.499,-0.0625,0) -- (1.8737,0.0625,0) -- (1.499,0.0625,0) -- cycle;
\path[stlfacet] (1.499,-0.0625,0) -- (1.8737,-0.0625,0) -- (1.8737,0.0625,0) -- cycle;
\path[stlfacet] (1.8737,-0.0625,0) -- (2.2484,0.0625,0) -- (1.8737,0.0625,0) -- cycle;
\path[stlfacet] (1.8737,-0.0625,0) -- (2.2484,-0.0625,0) -- (2.2484,0.0625,0) -- cycle;
    \end{tikzpicture}
    
    \caption{An example of a mesh for a microstrip dipole antenna.}
    \label{fig: dipole mesh}
    
\end{figure}

Such dipole antennas radiate a field with a characteristic toroidal shape.
It is therefore a good test case for our method whether we can (i) recover the known optimal length, and (ii) recover the correct shape of this radiated field.
We validated our method by modeling a dipole of length $1.5\lambda$ following \Cref{section: prelims,section: bem}.

\paragraph*{Mesh.}~
We discretized the strip dipole using a mesh (cf.\ \Cref{section: discrete geometry}) of $n = 320$ mesh elements
The mesh has a width of $W = 0.00625\lambda$ and a length of $L = 3\lambda/2$
(The precise width is not very important for the radiation characteristics of a dipole, so long as $W \ll L$~\cite{balanisAntennaTheoryAnalysis2012}.)

\paragraph*{Constraints.}~
Following \Cref{sec: impedance}, we impose the constraint $|Z_{\mathrm{in}}| \geq 50 \Omega$.

We moreover impose three geometric constraints.
First, since the dipole antenna is a connected strip of metal, we impose the connectivity constraints of \Cref{sec: connectivity}.
Second, strip dipoles are rectangular in shape, which means that removing triangular mesh elements may create undesirable geometries.
We therefore partition the mesh elements into $N = 160$ rectangular patches, constraining these patches so that both all triangles of a patch are enabled or neither is.
Third, we impose that the solutions are symmetric, following \Cref{section: symmetry}.

\paragraph*{Results.}~
Our computations reconfirm that the optimal sub-mesh should have a length of $0.47\lambda$, which agrees with the known range for the optimal length of a dipole antenna.
The computation time was 1133 seconds.
Due to the coarse discretization, the optimal length can only be determined up to an error of $3\lambda/(2N) \approx 0.0094\lambda$.

\begin{figure}
    \centering
    \includegraphics[width=\linewidth]{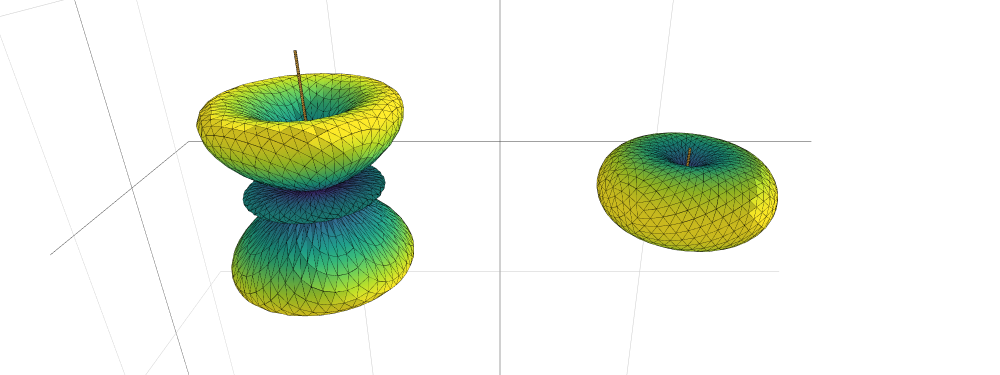}
    \caption{Initial and optimized radiation pattern of the microstrip dipole antenna test case, for $N = 160$ rectangular patches.
        The radial distance of a point on the surface from the origin corresponds the field strength in that direction.
    }
    \label{fig: dipole results}
\end{figure}

\subsection{Yagi-Uda Antenna}
\label{section: yagi uda}

The classic Yagi--Uda antenna~\cite{yagi1926projector} consists of a dipole together with specially-spaced \emph{parasitic elements}---pieces of metal that are not actively fed, but that (ideally) respond to the electromagnetic fields generated by the dipole to generate a more directional antenna pattern. 
Even though this antenna type is rather straightforward and a century old, no generally valid closed-form expression of its antenna pattern is known, nor a closed-form expression for the optimal lengths and positions of the elements. For an extensive treatment of these antennas, see~\cite[Chapter 25]{orfanidisElectromagneticWavesAntennas2016}.

As the Yagi--Uda antenna is relatively simple to model and widely used, it is often used as a test case for optimization methods, e.g., \cite{vanbeurden2003integro}.
Especially parametric gradient-based algorithms are often used, since a Yagi--Uda of $N$ dipole elements can be parameterized by $2N-1$ variables: the $N$ lengths and the $N-1$ spacings between the elements.
Such gradient-based methods converge to some local optimum, and require significant computational resources, since for a fixed mesh $\M$ the matrix $Z_{\M}$ depends non-linearly on the element lengths and spacings.

\begin{figure}
    \centering
        \begin{tikzpicture}
    [
      scale=0.35,
      x={(0cm,1cm)},
      y={(1cm,0cm)},
      z={(0cm,0cm)}
    ]
        \pgfmathsetmacro{\lam}{10}
        \pgfmathsetmacro{\d}{0.5*\lam}
        \pgfmathsetmacro{\offset}{0.5}
        \pgfmathsetmacro{\L}{0.6*\lam}
        
        \tikzset{stlfacet/.style={fill=pec, draw=black, line join=round}}
        \input{yagi_7.tikz}

        \node[rotate=90] () at (0, -0.2*\lam-2*\offset) {$0.6\lambda$};
        
        \draw[<->] (\L/2+\offset, -0.2*\lam) -- (\L/2+\offset, 0) node[midway, above, sloped] {$0.2\lambda$};
        \draw[<->] (-\L/2-\offset, 0) -- (-\L/2-\offset, 0.2*\lam) node[midway, below, sloped] {$0.2\lambda$};

        \draw[->] (-\L/2-2*\offset, -\d-2*\offset) -- ++ (2, 0) node[left] {$x$};
        \draw[->] (-\L/2-2*\offset, -\d-2*\offset) -- ++ (0, 2) node[below] {$z$};

        \tikzset{shift={(0, 1.6*\lam)}}
        \input{yagi_opt_7.tikz}

        \foreach \y/\len in {
          -0.2/0.54,
           0.0/0.48,
           0.2/0.42,
           0.4/0.42,
           0.6/0.42,
           0.8/0.42,
           1.0/0.42
        } {
            \node[rotate=90] () at (0, \y*\lam-2*\offset) {$\len\lambda$};
        }
    \end{tikzpicture}
    \caption{Initial mesh and optimal sub-mesh for the seven-element Yagi--Uda antenna instance.}
    \label{fig: yagi mesh}
\end{figure}

\paragraph{Mesh.}~
Since the Yagi--Uda consists of a set of disjoint dipole antennas, our modeling approach here is similar to the previous test case.
We place $3$ to $7$ dipole meshes with 40 triangles each, of width $W = 0.00625\lambda$ and length $L = 0.6\lambda$ at a fixed inter-element spacing of $\Delta x = 0.2\lambda$. 
See \Cref{fig: yagi mesh}.

\paragraph{Constraints.}~ 
We constrain the length of each dipole to be at least $0.2\lambda$.
These figures are in line with generally recommended guidelines for designing Yagi--Uda antennas; see, e.g., Balanis~\cite{balanisAntennaTheoryAnalysis2012}.
We also constrain each dipole element to be individually a connected component, using a flow formulation similar to the one detailed in \Cref{sec: connectivity}, but adapted to deal with the separate connected components. As with the dipole, we enforce that each element is symmetric.
Following \Cref{sec: impedance}, we enforce 
\[
|Z_{\mathrm{in}}| \geq 50 \, \Omega.
\]

\paragraph{Objective.}~
Special attention must be paid to the objective function.
Since the Yagi--Uda antenna is meant to generate a highly directed beam, it pays to also consider the radiation sent in the reverse direction--the \emph{backlobe radiation}.
Naively maximizing the forward radiation can yield solutions with a large backlobe field, which is undesirable.
We therefore add a penalty term to the objective function of \Cref{thm: base mip}:
\[
    \max_{x \in \{0, 1\}^n} \Re\{c_\M^T I\} - |c_{\M, \mathrm{back}}^T I|.
\]
Here, $c_{\M,\mathrm{back}}$ is derived in the same way as $c_\M$, but yields instead the complex electric field in the negative-$y$ direction.
Since the second term above is convex, this objective function is concave.

Note that this is not the only way in which one can control the backlobe radiation levels; one could also add a (second-order cone) constraint imposing that $|c_{\M, \mathrm{back}}^T I|$ is below some prescribed level, or maximize the so-called \emph{front-to-back ratio}
\[
FBR = \left|\frac{c_\M^T I}{c_{\M, \mathrm{back}}^T I}\right|
\]
by application of \Cref{thm:charnescooper}. In our experimentation, however, we have found that this is generally more costly.

\paragraph{Results.}

We are able to find provably optimal Yagi-Uda designs with up to 7 elements (see \Cref{tab:yagi-scaling}). We see that, as expected, these designs improve with the number of elements. 
In Figures \ref{fig: yagi mesh} and \ref{fig: yagi results}, one can see the difference between the initial and optimized mesh and their associated antenna patterns for the 7-element case.
We also see that the runtime scales very poorly with the number of elements; where the 3-element case is solved in just 7 seconds, the 7-element case takes 42575 seconds.
We believe that there are several contributing factors:

\begin{enumerate}
    \item Because we lack bounds on the current variables $I$ of Program \eqref{eq: natural formulation}, the solver is unable to provide us with reasonable dual bounds. This problem is addressed in a sequel paper~\cite{hoeksma2026physics}.
    
    \item The matrix $Z_\M$ has a very high condition number, which increases with the size of the matrix. This is a known consequence of fundamental properties of the operator $\mathcal{L}$, whose singular values accumulate around zero and infinity~\cite{Andriulli20082398, Ndlec2001}. Preconditioners for $Z_\M$ do exist~\cite{Andriulli20082398}, but a naive application of these destroys the sparsity and condition number of other blocks of the constraint matrix.
    
    \item The matrix $Z_\M$ is large and dense, whereas most MIP solvers are optimized for hypersparse problems, as these are much more typical. Therefore, significant additional solver overhead might be incurred.
\end{enumerate}

All in all, this test case shows that our method can compute provably optimal antennas within the given (discrete) design space, also for non-trivial design problems.
However, it also makes clear that there is still room for improvement, particularly with respect to the above points.

\begin{table}
    \centering
    \caption{Running time scaling for Yagi-Uda antennas with 3 to 7 elements, meshed with 40 triangles each.}
    \label{tab:yagi-scaling}
    \begin{tabular}{lccccc}\toprule
         Antenna elements       &  3    &  4    &  5    &  6    &  7     \\\midrule
         Mesh triangles         & 120   & 160   & 200   & 240   & 280    \\
         Time to optimality (s) & 7     & 59    & 726   & 4334  & 42575     \\ 
         Objective value        & 1.428 & 1.445 & 2.028 & 2.041 & 2.438  \\\bottomrule
    \end{tabular}    
\end{table}
\begin{figure}
    \centering
    \includegraphics[width=\linewidth]{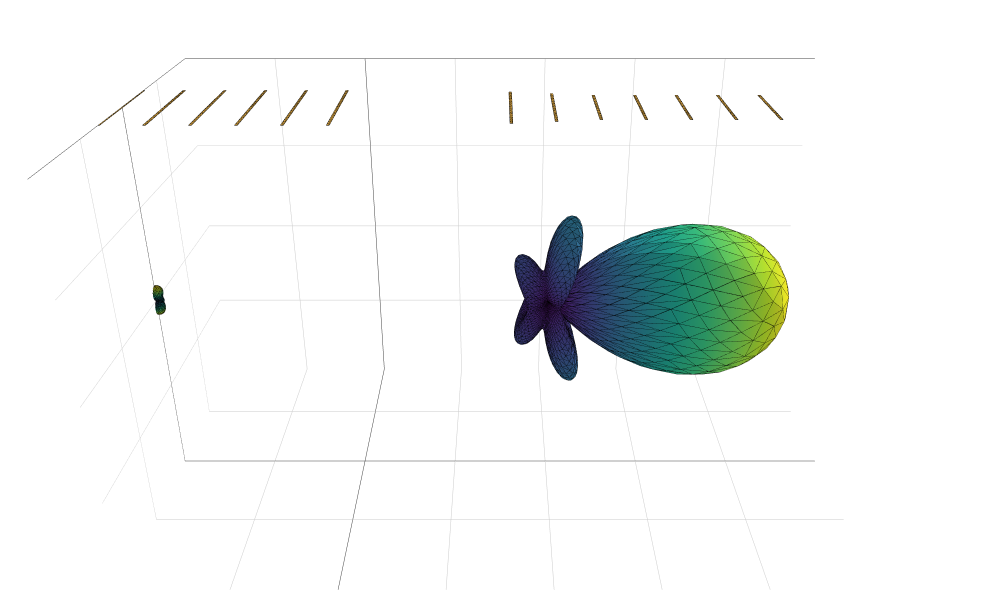}
    \caption{Initial and optimized design and radiation pattern of the Yagi--Uda antenna test case with seven elements. Note that the radiation from the initial design is very weak compared to the optimized design.}
    \label{fig: yagi results}
    
\end{figure}

\section{Discussion}
\label{section: discussion}

The main contribution of this paper is to give evidence that an optimization problem that by its nature is highly  non-linear and inverse can  be solved to optimality by a single mixed integer program, in reasonable computation time. The fact that this yields optimality guarantees contrasts our work with all previous publications that address antenna design problems. 

While  the paper focuses on the optimal design of antenna topologies, we believe that our approach can serve as a blueprint also for other nonlinear and inverse design problems, say in fluid mechanics or acoustics (the latter field even has direct analogues of many of the concepts we discuss here). That being said, the purpose of this paper is also to open a new source of discrete optimization challenges for the mathematical optimization community. 

This paper is the first in a series; the second paper~\cite{hoeksma2026physics} develops our method further to solve more complicated antenna design problems with more constraints, and develops techniques that allow larger problems to be solved in reasonable time.

Next to these two papers, which focus on the mathematical fundamentals of our approach, there is a third companion paper~\cite{vanrhijn2026optimal} that addresses the same topic but has a different focus. It targets the antenna engineering community,  and  provides three  elaborate use cases to highlight the promise that lies in using discrete optimization methodology, and mixed integer programming in particular. 

That companion paper has a focus on practically relevant design questions such as impedance matching and polarization. For the latter, the mathematical background is given in~\cite{hoeksma2026physics}. It also features a  comparison to an existing competitive Yagi--Uda design.

An obvious direction for future research is to generate more ideas for scalability of the mixed integer programming approach. 
That will be necessary for example when considering more complicated antenna designs, and especially when optimizing not just a single antenna but a whole array of antennas, since nearby radiating antennas interact with each other.

\begin{acknowledgements}
We are grateful to Dave Bekers, Raymond van Dijk, Stefania Monni and Yannis Iliopoulos at the Netherlands Organisation for Applied Scientific Research, who provided us with much help and feedback over the course of this project. 

The authors received funding from the Dutch National Growth Fund programme POLARIS (\url{www.polaris-ngf.nl}).
\end{acknowledgements}


\section*{Conflict of interest and AI}

The authors declare that they have no conflict of interest. This paper and the results therein were obtained without substantial usage of AI tools. AI was exclusively used for some routine calculations, to help with literature search, and to help with programming.


\printbibliography

\begin{appendix}
    \section{Proof of \texorpdfstring{\Cref{thm:charnescooper}}{Theorem 3.2}}
    \label{app:charnescooper}
    \charnescooper*
    \begin{proof}
    We will first prove the first statement.
    Suppose that $x^*$ is not optimal for \eqref{eq: problem R}, that is, there exists an $x' \in R$ with 
    \[
    \left|\frac{c^Tx' + \alpha}{d^Tx' + \beta}\right| > \left|\frac{c^Tx^* + \alpha}{d^Tx^* + \beta}\right|.
    \]
    The above can only hold if $d^Tx' + \beta \neq 0$ and $c^Tx' + \alpha \neq 0$.
    So let 
    \begin{align*}
        y' = \frac{x'}{|d^Tx' + \beta|}\frac{\overline{(c^Tx' + \alpha)}}{|c^Tx' + \alpha|},\\
        z' = \frac{1}{|d^Tx' + \beta|}\frac{\overline{(c^Tx' + \alpha)}}{|c^Tx' + \alpha|}
    \end{align*}
    Then $(y', z') \in S$, since
    \begin{align*}
        Ay' = \frac{Ax'}{|d^Tx' + \beta|}\frac{\overline{(c^Tx' + \alpha)}}{|c^Tx' + \alpha|} = \frac{b}{|d^Tx' + \beta|}\frac{\overline{(c^Tx' + \alpha)}}{|c^Tx' + \alpha|} = bz'\\
        |d^Ty' + \beta z'| = \left|\frac{(d^Tx' + \beta)}{|d^Tx' + \beta|}\frac{\overline{(c^Tx' + \alpha)}}{|c^Tx' + \alpha|}\right| = 1.
    \end{align*}
    But now
    \begin{align*}
        \Re\{ c^Ty' + \alpha z' \} 
        &= \Re \left\{ \frac{(c^Tx' + \alpha)}{|d^Tx' + \beta|}\frac{\overline{(c^Tx' + \alpha)}}{|c^Tx' + \alpha|} \right\} 
        = \Re\left\{\frac{|c^Tx'+\alpha|^2}{|d^Tx' + \beta||c^Tx' + \alpha|}\right\} 
        = \left|\frac{c^Tx' + \alpha}{d^Tx' + \beta}\right|\\
        &> \left|\frac{c^Tx^* + \alpha}{d^Tx^* + \beta}\right|
        = \left|\frac{c^Ty^* + \alpha z^*}{d^Ty^* + \beta z^*}\right|
        = |c^T y^* + \alpha z^*| \geq \Re\{c^Ty^* + \alpha z^*\},
    \end{align*}
    so $(y^*, z^*)$ is not optimal for \eqref{eq: problem S}, contradicting our assumption.
    Therefore, $x^*$ must be optimal for \eqref{eq: problem R}.

    Now, since $\arg z$ is free, in an optimal solution it will be so chosen such that the real part of the objective equals its absolute value, which implies
    \[
        \Re \{c^Ty^* + \alpha z^*\} = |c^Ty^* + \alpha z^*| = \frac{|c^Ty^* + \alpha z^*|}{|d^Ty^* + \beta z^*|} = \left|\frac{c^Tx^* + \alpha}{d^Tx^* + \beta}\right|.
    \]

    Now, we prove the second statement by cases. Let $(y^*, z^*)$ be an optimal solution to \eqref{eq: problem S} with $|d^Ty^* +\beta z^*| < 1$. Clearly, $\Re\left\{c^Ty^* + \alpha z^*\right\} \geq 0$, since otherwise $(-y^*, -z^*)$ is a feasible solution with better objective. 

    \begin{description}
        \item[Case 1:] $d^Ty^* +\beta z^* \neq 0$ and $z^* \neq 0$. Then
        \[
            (y', z') = \left(\frac{y^*}{|d^Ty^* +\beta z^*|}, \frac{z^*}{|d^Ty^* +\beta z^*|}\right)
        \]
        is also feasible with objective
        \[
            \Re\left\{c^Ty' + \alpha z'\right\} =  \Re\left\{\frac{c^Ty^* + \alpha z^*}{|d^Ty^* +\beta z^*|}\right\} \geq \Re\left\{c^Ty^* + \alpha z^*\right\},
        \]
        where the above inequality becomes strict whenever $\Re\left\{c^Ty^* + \alpha z^*\right\} > 0$, contradicting optimality of $(y^*, z^*)$. In the case where $\Re\left\{c^Ty^* + \alpha z^*\right\} = 0$,  $(y', z')$ is also optimal for \eqref{eq: problem S} with $|d^Ty' +\beta z'| = 1$.
        
        \item[Case 2:] $d^Ty^* +\beta z^* \neq 0$ and $z^* = 0$. As in the above case, we must have $\Re\left\{c^Ty^* + \alpha z^*\right\} = 0$. By optimality of $(y^*, z^*)$, there cannot exist an $x$ with 
        \[
            Ax = b, |c^Tx + \alpha| > 0, |d^Tx + \beta| > 0,
        \]
        since otherwise we could use such a solution (call it $x'$) to construct 
        \begin{align*}
            y' = \frac{x'}{|d^Tx' + \beta|}\frac{\overline{(c^Tx' + \alpha)}}{|c^Tx' + \alpha|},\\
            z' = \frac{1}{|d^Tx' + \beta|}\frac{\overline{(c^Tx' + \alpha)}}{|c^Tx' + \alpha|}.
        \end{align*}
        As before, this is feasible for \eqref{eq: problem S}, and has a positive objective value, so that
        \[
            \left|\frac{(c^Tx'+\alpha)}{|d^Tx' + \beta|}\frac{\overline{(c^Tx' + \alpha)}}{|c^Tx' + \alpha|}\right| =|c^Ty' + \alpha z'|\geq \Re\{c^Ty' + \alpha z'\} > \Re\{c^Ty^* + \alpha z^*\} = 0,
        \]
        contradicting optimality of $(y^*, z^*)$.
        So either the optimal objective value of \eqref{eq: problem R} is 0, or \eqref{eq: problem R} is infeasible.
        
        \item[Case 3] $d^Ty^* +\beta z^* = 0$.
        Then $Ay^* = bz^*$, so $A(\lambda y^*) = b(\lambda z*)$  and $d^T(\lambda y^*) + \beta\lambda z^* = 0$ for any $\lambda \in \C$. Then either
        \begin{description}
            \item[(a)] $|c^Ty^* + \alpha z^*| > 0$. Then \eqref{eq: problem S} is unbounded, contradicting optimality of $(y^*, z^*)$.
            \item[(b)] $c^Ty^* +\alpha z^*= 0$. Then, as before, either the optimal objective value of \eqref{eq: problem R} is 0, or \eqref{eq: problem R} is infeasible.
        \end{description}
    \end{description}
    
    Finally, we prove that unboundedness of \eqref{eq: problem S} implies unboundedness of \eqref{eq: problem R}.
    If \eqref{eq: problem S} is unbounded, then there exist $(y', z')$ such that 
    \begin{align*}
         \Re\{c^Ty' + \alpha z'\} > 0, && Ay' = bz', && d^Ty'+ \beta z' = 0.
    \end{align*}
    We will again proceed by cases.
    \begin{description}
        \item[Case 1] $z' \neq 0$. Then $x' = y'/z'$ is valid for \eqref{eq: problem R} with $Ax' = b$ and $d^Tx'+\beta = 0$.
        If there exists an $\hat{x}$ such that $A\hat{x} = b$, $d^T\hat{x}+\beta \neq 0$, then \eqref{eq: problem R} is unbounded, as all of the convex combinations
        \[
        \lambda x' + (1-\lambda)\hat{x}, \quad \lambda \in [0, 1)
        \] 
        are valid solutions of \eqref{eq: problem R}, and
        \[
        \lim_{\lambda\rightarrow1} \left|\frac{c^T( x' + (1-\lambda)\hat{x}) +\alpha}{d^T(\lambda x' + (1-\lambda)\hat{x}) +\beta}\right| =  \lim_{\lambda\rightarrow1} \left|\frac{\lambda (c^Tx' + \alpha) + (1-\lambda)(c^T\hat{x} +\alpha)}{(1-\lambda)(d^T\hat{x} +\beta)}\right| = \infty\,.
        \]
        So \eqref{eq: problem R} is infeasible or unbounded.
        \item[Case 2:] $z' = 0$. Now $y'$ is such that
        \begin{align*}
         \Re\{c^Ty'\} > 0 && Ay' = 0 && d^Ty' = 0,
        \end{align*}
        so $y'$ is an unbounded ray for \eqref{eq: problem R}, and therefore \eqref{eq: problem R} is either infeasible or unbounded.
    \end{description}
\end{proof}
\end{appendix}

\end{document}